\newtheorem{rema}[theorem]{Remark}
\newtheorem{hypo}{Hypothesis}
\DeclareMathOperator{\tr}{Tr}
\newcommand{\E}{\mathbb{E}}
\newcommand{\der}{\mathrm{d}}
\newcommand{\R}{\mathbb{R}}
\newcommand{\Pb}{\mathbb{P}}
\newcommand{\N}{\mathbb{N}}
\newcommand{\Q}{\mathbb{Q}}
\newcommand{\Pt}{\mathscr{P}_t}
\newcommand{\LL}{\mathscr{L}}
\newcommand{\F}{\mathscr{F}}
\newcommand{\Ga}{\mathscr{G}^1}
\newcommand{\LpC}{L_\mathscr{P}^p(\Omega, \mathscr{C}([0,T];H))}
\numberwithin{equation}{section}
\begin{document}

\title{A probabilistic approach to large time behaviour of mild solutions of HJB equations in infinite dimension}

\author{Ying Hu\thanks{IRMAR, Université Rennes 1, Campus de Beaulieu, 35042 Rennes Cedex, France (ying.hu@univ-rennes1.fr), partially supported by Lebesgue center of mathematics ("Investissements d'avenir"
program - ANR-11-LABX-0020-01)} \and Pierre-Yves Madec\thanks{IRMAR, Université Rennes 1, Campus de Beaulieu, 35042 Rennes Cedex, France (pierre-yves.madec@univ-rennes1.fr), partially supported by Lebesgue center of mathematics ("Investissements d'avenir"
program - ANR-11-LABX-0020-01)} \and Adrien Richou\thanks{Univ. Bordeaux, IMB, UMR 5251, F-33400 Talence, France.
(adrien.richou@math.u-bordeaux1.fr)}}

\maketitle

\begin{abstract}
We study the large time behaviour of mild solutions of HJB equations in infinite dimension by a purely probabilistic approach. For that purpose, we show that the solution of a BSDE in finite horizon $T$ taken at initial time behaves like a linear term in $T$ shifted with the solution of the associated EBSDE taken at initial time. Moreover we give an explicit rate of convergence, which seems to be new up to our best knowledge.
\end{abstract}
\begin{keywords}
Backward stochastic differential equations; Ergodic backward stochastic differential equations; HJB equations in infinite dimension; Large time behaviour; Mild solutions; Ornstein-Uhlenbeck operator.
\end{keywords}
\begin{AMS}
35B40, 60H30, 35R15, 93E20
\end{AMS}

\section{Introduction}
We are concerned with the large time behaviour of solutions of the Cauchy problem in an infinite dimensional real Hilbert space $H$: 
\begin{align}\label{HJB cauchy changement de temps}
\left\{ 
\begin{array}{ll}
\frac{\partial u(t,x)}{\partial t} = \LL u(t,x) + f(x,\nabla u(t,x)G) , & \forall (t,x) \in \R_+ \times H,\\
u(0,x) = g(x),&\forall x \in H,
\end{array}
\right.
\end{align}
where $u : \R_+ \times H \rightarrow \R$ is the unknown function and $\LL$ is the formal generator of the Kolmogorov semigroup $\Pt$ of an $H$-valued random process solution of the following Ornstein-Uhlenbeck stochastic differential equation:
\begin{align*}
\left\{ 
\begin{array}{ll}
\der X_t = (AX_t + F(X_t^x))   \der t + G \der W_t, & t \in \R_+, \\
X_0 = x , & x \in H,
\end{array}
\right.
\end{align*}
with $W$ a Wiener process with values in another real Hilbert space $\Xi$, assumed to be separable, and $G$ a linear operator from $\Xi$ to $H$.
We recall that (formally), $\forall h : H \rightarrow \R$,
\begin{align*}
(\LL h)(x) = \frac{1}{2}\tr (GG^* \nabla^2 h(x)) + \langle Ax + F(x), \nabla h(x) \rangle.
\end{align*}
Our method uses only probabilistic arguments and can be described as follows.

First, let $(v,\lambda)$ be the solution of the ergodic PDE:
\begin{align*}
\LL v + f(x,\nabla v(x) G) -\lambda = 0, ~~~\forall x \in H.
\end{align*}
Then we have the following probabilistic representation. Let $(Y^{T,x},Z^{T,x})$ be solution of the BSDE:
\begin{align*}
\left\{
\begin{array}{l}
\der Y_s^{T,x} = -f(X_s^x,Z_s^{T,x}) \der s + Z_s^{T,x} \der W_s,\\
Y_T^{T,x} = g(X_T^x),
\end{array}
\right.
\end{align*}
and $(Y,Z,\lambda)$ be solution of the EBSDE:
\begin{align*}
\der Y_s = -(f(X_s^x,Z_s^x)-\lambda) \der s + Z_s^x \der W_s.
\end{align*}
Then we get
\begin{align*}
\left\{
\begin{array}{l}
Y_s^{T,x} = u(T-s,X_s^x),\\
Y_s^x = v(X_s^x).
\end{array}
\right.
\end{align*}
Finally, due to Girsanov transformations and the use of an important coupling estimate result, we deduce that there exists a constant $L \in \R$ such that for all $x \in H$,
\begin{align*}
Y_0^{T,x} - \lambda T - Y_0^x \underset{T \rightarrow + \infty}{\longrightarrow} L,
\end{align*}
i.e.
\begin{align*}
u(T,x) - \lambda T - v(x) \underset{T \rightarrow + \infty}{\longrightarrow} L.
\end{align*}
Our method uses not only purely probabilistic arguments, but also gives a rate of convergence:
\begin{align*}
|u(T,x) - \lambda T - v(x) - L| \leq C(1 + |x|^{2+\mu})e^{-\hat{\eta} T}.
\end{align*}
The constant $\mu$ appearing above is the polynomial growth power of $g(\cdot)$ and $f(\cdot,0)$ while $\hat{\eta}$ is linked to the dissipative constant of $A$.

Large time behaviour of solutions has been studied for various types of HJB equations of second order; see, e.g.,  \cite{BARLES_SOUGANIDIS_SPACE_TIME}, \cite{FUJITA_ISHII_LORETI_ASYMPTOTIC}, \cite{ICHIHARA_SHEU_HAMILTON_JACOBI_BELLMAN} and \cite{NAMAH_ROQUEJOFFRE_CONVERGENCE}. In \cite{BARLES_SOUGANIDIS_SPACE_TIME}, a result in finite dimension is stated under periodic assumptions for $f$ and a periodic and Lipschitz assumption for $g$. Furthermore, they assume that $f(x,z)$ is of linear growth in $z$ and bounded in $x$. In \cite{FUJITA_ISHII_LORETI_ASYMPTOTIC}, some results are stated in finite dimensionnal framework, under locally Hölder conditions for the coeffcients. More precisely, they assume that $f(x,z) = H_1(z) - H_2(x)$ with $H_1$ a Lipschitz function and with locally H\"older conditions for $H_2$ and $g$. They also treat the case of $H_1$ locally Lipschitz but consequently need to assume that $H_2$ and $g$ are Lipschitz. Furthermore, they only treat the Laplacian case, namely they assume that $G = 
I_d$. No result on rate of convergence is given in that paper. In \cite{ICHIHARA_SHEU_HAMILTON_JACOBI_BELLMAN}, the authors deal with the problem in finite dimension. They also only treat the Laplacian case and assume that $f(x,z)$ is a convex function of quadratic growth in $z$ and of polynomial growth in $x$.  No result on rate of convergence is given in this paper. Up to our best knowledge, the explicit rate of convergence only appears in Theorem $1.2$ of \cite{NAMAH_ROQUEJOFFRE_CONVERGENCE} but in finite dimension and under periodic assumptions for $f(\cdot,z)$ and $g(\cdot)$. Furthermore, they only deal with the Laplacian case and they assume restrictive assumptions on $f$ (i.e., there exists $0 < m < M$ such that $m < f(x,z) \leq M(1+|z|)$ and  boundedness hypotheses about the partial derivatives of first and second order of $f$).

In this paper, we will assume that $A$ is a dissipative operator, $G : \Xi \rightarrow H$ is an invertible and bounded operator, $g : H \rightarrow \R$ continuous with polynomial growth and $f : H \times \Xi^* \rightarrow \R$ continuous, with polynomial growth in the first variable and Lipschitz in the second variable.

The paper is organised as follows: In section $2$, we introduce some notations. In section $3$, we recall some results about existence and uniqueness results for solutions of an  Ornstein-Ulhenbeck SDE, a general BSDE and an EBSDE that will be useful for what follow in the paper. In section $4$, we study the behaviour of the solution of the BSDE taken at initial time when the horizon $T$ of the BSDE increases. More precisely, in a first time we are concerned with the path dependent framework, where a very general result can be stated. Then in the Markovian framework we obtain a more precise result for the behaviour of solutions and a rate of convergence is given.  In section $5$, we apply our result to an optimal control problem.

%

\section{Notations}
We introduce some notations. Let $E_1$, $E_2$ and $E_3$ be real separable Hilbert spaces. The norm and the scalar product will be denoted by $|\cdot |$, $\langle\cdot,\cdot \rangle$, with subscripts if needed. $L(E_1,E_3)$ is the space of linear bounded operators $E_1 \rightarrow E_3$, with the operator norm, which is denoted by $|\cdot|_{L(E_1,E_3)}$ . The domain of a linear (unbounded) operator $A$ is denoted by $D(A)$. $L_2(E_1,E_3)$ denotes the space of Hilbert-Schmidt operators from $E_1$ to $E_3$, endowed with the Hilbert-Schmidt norm, which is denoted by $|\cdot|_{L_2(E_1,E_3)}$.

Given $\phi \in B_b(E_1)$, the space of bounded and measurable functions $\phi : E_1 \rightarrow \R$, we denote by $||\phi ||_0 = \sup_{x \in E_1} |\phi(x)|$. 

We say that a function $F : E_1 \rightarrow E_3$ belongs to the class $\mathscr{G}^1(E_1,E_3)$ if it is continuous, has a Gâteaux derivative $\nabla F(x) \in L(E_1,E_3)$ at any point $x \in E_1$, and for every $k \in E_1$, the mapping $x \mapsto \nabla F(x) k$ is continuous from $E_1$ to $E_3$. Similarly, we say that a function $F : E_1 \times E_2 \rightarrow E_3$ belongs to the class $\mathscr{G}^{1,0}(E_1 \times E_2 , E_3)$ if it is continuous, Gâteaux differentiable with respect to the first variable on $E_1 \times E_2$ and $\nabla_x F : E_1 \times E_2 \rightarrow L(E_1, E_3)$  is strongly continuous. In connection with stochastic equations, the space $\Ga$ has been introduced in \cite{FUHRMAN_TESSITORE_NONLINEAR_KOLMOGOROV_INFINITE_DIMENSION}, to which we refer the reader for further properties.

Given a real and separable Hilbert space K and a probability space $(\Omega, \F, \Pb)$ with a filtration $\F_t$, we consider the following classes of stochastic processes.
\smallskip \\
1. $L_\mathscr{P}^p(\Omega, \mathscr{C}([0,T];K))$, $p \in [1,\infty)$, $T > 0$, is the space of predictable processes $Y$ with continuous paths on $[0,T]$ such that 
\begin{align*}
|Y|_{L_\mathscr{P}^p(\Omega, \mathscr{C}([0,T];K))} = \E \sup_{t \in [0,T]} |Y_t|_K^p < \infty.
\end{align*}
\smallskip \\
2. $L_\mathscr{P}^p(\Omega, {L}^2([0,T];K))$, $p \in [1,\infty)$, $T > 0$, is the space of predictable processes $Y$ on $[0,T]$ such that 
\begin{align*}
|Y|_{L_\mathscr{P}^p(\Omega, L^2([0,T];K))}  = \E \left( \int_0^T |Y_t|_K^2 \der t \right)^{p/2} < \infty.
\end{align*}
\smallskip \\
3. $L_{\mathscr{P},\text{loc}}^2(\Omega, {L}^2([0,\infty);K))$ is the space of predictable processes $Y$ on $[0,\infty)$ which belong to the space $L_\mathscr{P}^2(\Omega,L^2([0,T];K))$ for every $T >0$. We define in the same way $L_{\mathscr{P},\text{loc}}^p(\Omega, \mathscr{C}([0,\infty);K))$.

In the following, we consider a complete probability space $(\Omega, \F, \Pb)$ and a cylindrical Wiener process denoted by $(W_t)_{t \geq 0}$  with values in $\Xi$, which is a real and separable Hilbert space. $(\F_t)_{t \geq 0 }$ will denote the natural filtration of $W$ augmented with the family of $\Pb$-null sets of $\F$. $H$ denotes a real and separable Hilbert space in which the SDE will take values.

\section{Preliminaries}
We will need some results about the solution of the SDE when a perturbation term $F$ is in the drift.
\subsection{The perturbed forward SDE}

Let us consider the following mild stochastic differential equation for an unknown process $(X_t)_{t \geq 0} $ with values in $H$:
\begin{align}\label{SDE}
X_t = e^{tA}x + \int_0^t e^{(t-s)A} F(s,X_s) \der s + \int_0^t e^{(t-s)A} G\der W_s, ~~\forall t \geq 0,~~ \Pb-a.s.
\end{align}
Let us introduce the following hypothesis.
\begin{hypo}\label{hypo SDE non degenerate}
\begin{enumerate}
\item $A$ is an unbounded operator $A : D(A) \subset H \rightarrow H$, with $D(A)$ dense in $H$. We assume that $A$ is dissipative and generates a stable $C_0$-semigroup $\left\{ e^{tA}  \right\}_{t \geq 0}$. By this we mean that there exist constants $\eta > 0$ and $M > 0$ such that 
\begin{align*}
\langle Ax,x \rangle \leq -\eta |x|^2, ~~ \forall x \in D(A); ~~~ |e^{tA}|_{L(H,H)} \leq Me^{-\eta t}, ~~\forall t \geq 0.
\end{align*}
\item For all $s > 0$, $e^{sA}$ is a Hilbert-Schmidt operator. Moreover $|e^{sA}|_{L_2(H,H)} \leq M s^{-\gamma}$  with $\gamma \in [0,1/2)$.
\item $F : \R_+ \times H \rightarrow H$ is bounded and measurable.
\item $G$ is a bounded linear operator in $L(\Xi,H)$.
\item $G$ is  invertible. We denote by $G^{-1}$ its bounded inverse given by Banach's Theorem. 
\end{enumerate}
\end{hypo}
\begin{rema}\label{remarque sur e^{sA} G(x) Hilbert Schmidt}
Note that under the previous set of hypotheses, we immediately get that:
\begin{align*}
|e^{sA}G|^2_{L_2(\Xi,H)} &\leq |G|_{L(\Xi,H)}^2 |e^{sA}|_{L_2(H,H)}^2 \\
& \leq |G|_{L(\Xi,H)}^2|e^{\frac{s}{2}A}|^2_{L(H,H)}|e^{\frac{s}{2}A}|_{L_2(H,H)}^2 \\
& \leq M^2 e^{-\eta s} \left(\frac{s}{2}\right)^{-2 \gamma},
 \end{align*}
 which shows that for every $s >0$ and $x\in H$,  $e^{sA}G \in L_2(\Xi,H)$, which can be used to control the stochastic integral over the time.
\end{rema}

\begin{definition}
We say that the SDE (\ref{SDE}) admits a martingale solution if there exists a new $\F$-Wiener process $(\widehat{W}^x)_{t \geq 0}$ with respect to a new probability measure $\widehat{\Pb}$ (absolutely continuous with respect to $\Pb$), and  an $\F$-adapted process $\widehat{X}^x$ with continuous trajectories for which $(\ref{SDE})$ holds with $W$ replaced by $\widehat{W}$.
\end{definition}

\begin{lemma}\label{lemma existence uniqueness SDE}
Assume that Hypothesis \ref{hypo SDE non degenerate} (1.)-(4.) holds and that $F$ is bounded and Lipschitz in $x$. Then for every $p \in [2, \infty)$, for every $T > 0$ there exists a unique process $X^x \in \LpC$ solution of (\ref{SDE}). Moreover,  
\begin{align}\label{estimate E sup |X_t|}
&\sup_{0 \leq t < + \infty }\E  |X_t^x|^p \leq C(1+|x|)^p,\\
&\label{estimate E sup |X_t| 2} \mathbb{E}\left[\sup_{0 \leq t \leq T} |X_t^x|^p\right] \leq C(1+T)(1+|x|^p),
\end{align}
for some constant $C$ depending only on $p, \gamma, M$ and $\sup_{t\geq 0}\sup_{x\in H}|F(t,x)|$. 

If $F$ is only bounded and measurable, then the solution to equation ($\ref{SDE}$) still exists but in the martingale sense. Moreover (\ref{estimate E sup |X_t|}) and (\ref{estimate E sup |X_t| 2}) still hold (with respect to the new probability). Finally such a martingale solution is unique in law.
\end{lemma}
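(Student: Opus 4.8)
The plan is the classical one: isolate the Ornstein--Uhlenbeck stochastic convolution, solve the remaining equation pathwise by a contraction argument, derive the moment bounds directly, and obtain the merely measurable case by a Girsanov transformation. I would first set $W_A(t):=\int_0^t e^{(t-s)A}G\,\der W_s$. By Remark~\ref{remarque sur e^{sA} G(x) Hilbert Schmidt}, $|e^{sA}G|_{L_2(\Xi,H)}^2\le M^2 e^{-\eta s}(s/2)^{-2\gamma}$, which is integrable on $(0,\infty)$ since $\gamma<1/2$ and $\eta>0$; hence $W_A$ is a well-defined centred Gaussian process with $\sup_{t\ge0}\E|W_A(t)|_H^2=\sup_{t\ge0}\int_0^t|e^{(t-s)A}G|_{L_2(\Xi,H)}^2\,\der s<\infty$. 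The factorization method of Da Prato--Zabczyk (with an exponent $\alpha\in(\gamma,1/2)$) gives $W_A$ a predictable modification with continuous paths and the bounds $\sup_{t\ge0}\E|W_A(t)|_H^p<\infty$ and $\E\sup_{t\in[n,n+1]}|\int_n^t e^{(t-s)A}G\,\der W_s|_H^p\le c_p$ uniformly in $n\in\N$, for every $p\in[2,\infty)$ (Gaussian hypercontractivity reducing matters to $p=2$, and the stationarity of the increments of $W$ giving the $n$-uniformity).

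Writing $Y_t:=X_t-W_A(t)$, equation~(\ref{SDE}) on $[0,T]$ is equivalent, $\Pb$-a.s., to the pathwise equation $Y_t=e^{tA}x+\int_0^te^{(t-s)A}F(s,Y_s+W_A(s))\,\der s$. For $\omega$ fixed outside a null set, the map $\Phi(Y)_t:=e^{tA}x+\int_0^te^{(t-s)A}F(s,Y_s+W_A(s))\,\der s$ sends $\mathscr{C}([0,T];H)$ into itself (its integrand is bounded by $M\kappa$, $\kappa:=\sup_{s\ge0,\,x\in H}|F(s,x)|$), and since $F$ is Lipschitz in $x$ it is a strict contraction on $\mathscr{C}([0,T];H)$ for the equivalent norm $\|Y\|_\beta:=\sup_{t\in[0,T]}e^{-\beta t}|Y_t|$ with $\beta$ large; Banach's theorem gives a unique continuous $Y(\omega)$, and since the Picard iterates are predictable so is $Y$, whence $X:=Y+W_A$ is the unique predictable process with continuous paths solving~(\ref{SDE}) (membership in $\LpC$ will follow from the moment bound). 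For the estimates, the mild formula, $|e^{tA}x|\le Me^{-\eta t}|x|$ and $|\int_0^te^{(t-s)A}F(s,X_s)\,\der s|\le M\kappa/\eta$ give $\E|X_t^x|^p\le C_p\big(M^pe^{-p\eta t}|x|^p+(M\kappa/\eta)^p+\E|W_A(t)|^p\big)$; the uniform-in-$t$ bound on $\E|W_A(t)|^p$ then yields~(\ref{estimate E sup |X_t|}). Restarting~(\ref{SDE}) at integer times, on $[n,n+1]$ one has $\sup_{t\in[n,n+1]}|X_t|\le M|X_n|+M\kappa/\eta+\sup_{t\in[n,n+1]}|\int_n^te^{(t-s)A}G\,\der W_s|$, and taking $p$-th moments, using~(\ref{estimate E sup |X_t|}) for $\E|X_n|^p$ and the $n$-uniform bound on the convolution over a unit interval, gives $\E\sup_{t\in[n,n+1]}|X_t|^p\le C(1+|x|^p)$; summing over $n=0,\dots,\lfloor T\rfloor$ produces~(\ref{estimate E sup |X_t| 2}) and in particular $X\in\LpC$.

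For the case where $F$ is only bounded and measurable, let $X^0_t:=e^{tA}x+W_A(t)$, the solution of~(\ref{SDE}) with $F\equiv0$, continuous with the previous moment bounds. Since $G^{-1}\in L(H,\Xi)$ is bounded, $u_s:=G^{-1}F(s,X^0_s)$ is predictable, $\Xi$-valued and bounded, so Novikov's criterion holds and $\left.\frac{\der\widehat\Pb}{\der\Pb}\right|_{\F_T}:=\exp\!\big(\int_0^T\langle u_s,\der W_s\rangle-\tfrac12\int_0^T|u_s|^2\,\der s\big)$ defines a probability $\widehat\Pb\ll\Pb$; by Girsanov, $\widehat W_t:=W_t-\int_0^tu_s\,\der s$ is a cylindrical Wiener process under $\widehat\Pb$ on $[0,T]$, and substituting $\der W_s=\der\widehat W_s+u_s\,\der s$ and using $GG^{-1}=I$ shows that $(X^0,\widehat W,\widehat\Pb)$ solves~(\ref{SDE}), i.e.\ it is a martingale solution; the estimates~(\ref{estimate E sup |X_t|})--(\ref{estimate E sup |X_t| 2}) hold under $\widehat\Pb$ by the same computation, since $\int_0^\cdot e^{(\cdot-s)A}G\,\der\widehat W_s$ is once more the convolution of a deterministic kernel against a cylindrical Wiener process and the drift is bounded. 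For uniqueness in law, given any martingale solution $(\widehat X,\widehat W,\widehat\Pb)$, the reverse change of measure with density $\exp\!\big(-\int_0^T\langle G^{-1}F(s,\widehat X_s),\der\widehat W_s\rangle-\tfrac12\int_0^T|G^{-1}F(s,\widehat X_s)|^2\,\der s\big)$ turns $\widehat X$ into the explicit Ornstein--Uhlenbeck process $e^{\cdot A}x+\int_0^\cdot e^{(\cdot-s)A}G\,\der W_s$ driven by a Wiener process $W$ under the new measure; since that process is a fixed functional of $W$ (whose law does not vary) and the reverse density is a functional of it, unwinding the transformation identifies the $\widehat\Pb$-law of $\widehat X$, which is therefore the same for every martingale solution.

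I expect the technical core to be the analysis of the stochastic convolution $W_A$: its path continuity and, above all, the \emph{uniform-in-$t$} moment bound, which rests on combining dissipativity ($\eta>0$) with the Hilbert--Schmidt decay of Remark~\ref{remarque sur e^{sA} G(x) Hilbert Schmidt}, and the linear-in-$T$ bound for the running supremum, which is what forces the restart-on-unit-intervals bookkeeping. The other delicate point is checking that the Girsanov transformation is legitimate and reversible for a cylindrical Wiener process in this infinite-dimensional setting, which is precisely where the invertibility of $G$ in Hypothesis~\ref{hypo SDE non degenerate} (5.) enters.
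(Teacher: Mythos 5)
Your proposal is correct, and its overall architecture (mild formulation, factorization method for the stochastic convolution, Girsanov for the merely measurable drift) matches the paper's; the genuine difference lies in how the two key estimates are obtained. For \eqref{estimate E sup |X_t|} the paper simply cites the appendix of Debussche--Hu--Tessitore, whereas you derive it directly from the decomposition $X_t=e^{tA}x+\int_0^te^{(t-s)A}F\,\der s+W_A(t)$ together with the uniform-in-$t$ Gaussian moment bound on $W_A$ coming from Remark \ref{remarque sur e^{sA} G(x) Hilbert Schmidt}; likewise you replace the citation of Da Prato--Zabczyk by an explicit pathwise contraction. For the estimate \eqref{estimate E sup |X_t| 2}, which is the part the paper actually proves in detail, the routes diverge: the paper applies the factorization method globally on $[0,T]$, writing the convolution as $c_\alpha\int_0^t(t-u)^{\alpha-1}e^{(t-u)A}Y_u\,\der u$ and extracting the factor $T$ from $\int_0^T|Y_u|^p\der u\leq T\sup_u \mathbb{E}|Y_u|^p$ via H\"older and BDG, while you restart the equation on unit intervals $[n,n+1]$, use the flow property plus the $n$-uniform bound $\mathbb{E}\sup_{t\in[n,n+1]}|\int_n^te^{(t-s)A}G\,\der W_s|^p\leq c_p$ (stationarity of the increments of $W$ reduces this to the interval $[0,1]$, where factorization applies), and sum the $\lfloor T\rfloor+1$ contributions to get the linear growth in $T$. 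Both arguments are valid and yield the same constant structure; the paper's is shorter once the factorization identity is set up, while yours makes transparent \emph{why} the bound is linear in $T$ (it is a sum of $O(T)$ identically controlled unit blocks) and reuses the pointwise bound \eqref{estimate E sup |X_t|} at the restart times. Your treatment of the martingale solution and of uniqueness in law by the reverse Girsanov transformation is the standard argument the paper leaves implicit, and it is correctly carried out, including the role of the boundedness of $G^{-1}$ from Hypothesis \ref{hypo SDE non degenerate} (5.) in verifying Novikov's condition.
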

\begin{proof}
For the first part of the lemma see \cite{DA_PRATO_ZABCZYK_STOCHASTIC_EQUATIONS_INFINITE_DIMENSION}, Theorem 7.4. For the estimate (\ref{estimate E sup |X_t|}) see Appendix $A.1$ in \cite{DEBUSSCHE_HU_TESSITORE_EBSDE_WEAK_DISSIPATIVE}. The end of the lemma is a simple consequence of the Girsanov Theorem. We will now show the estimate \eqref{estimate E sup |X_t| 2}. The ideas of this proof are adapted from \cite{FUHRMAN_TESSITORE_NONLINEAR_KOLMOGOROV_INFINITE_DIMENSION} but under our assumptions, we obtain an interesting bound depending polynomially on $T$.  We have
\begin{eqnarray*}
\sup_{0 \leq t \leq T} |X_t^x|^p &\leq& C\left(|x|^p+ C\sup_{0 \leq t \leq T}\left(\int_0^t e^{-\eta (t-s)} ds\right)^p+\sup_{0 \leq t \leq T} \left|\int_0^t e^{(t-s)A}GdW_s\right|^p \right)\\
&\leq & C\left(1+|x|^p+\sup_{0 \leq t \leq T} \left|\int_0^t e^{(t-s)A}GdW_s\right|^p \right).
\end{eqnarray*}
Let us introduce 
$$c_{\alpha}^{-1} = \int_s^t (t-u)^{\alpha-1}(u-s)^{-\alpha}du$$
with $\alpha \in ]1/p,1/2 -\gamma[$: we can assume that $p$ is large enough and then for small $p$ we will just use Jensen inequality to obtain the result. Then, the classical factorization method gives us
\begin{eqnarray*}
 \int_0^t e^{(t-s)A}GdW_s &=& c_{\alpha}\int_0^t \int_s^t (t-u)^{\alpha-1}(u-s)^{-\alpha} du e^{(t-s)A}G dW_s\\
 &=& c_{\alpha} \int_0^t (t-u)^{\alpha-1}\int_0^u (u-s)^{-\alpha} e^{(t-s)A}G dW_s du\\
 &=& c_{\alpha} \int_0^t (t-u)^{\alpha-1}e^{(t-u)A}Y_udu,
\end{eqnarray*}
with 
$$Y_u = \int_0^u (u-s)^{-\alpha} e^{(u-s)A} GdW_s.$$
We apply H\"older's inequality to obtain, with $q$ the conjugate exponent of $p$ $\left(\textrm{i.e. } \frac{1}{p} + \frac{1}{q} = 1 \right)$,
\begin{eqnarray*}
 \left|\int_0^t e^{(t-s)A}GdW_s\right|^p&\leq& C \left( \int_0^t (t-u)^{(\alpha-1)q}e^{-(t-u)\eta q}du\right)^{p/q}\left( \int_0^t |Y_u|^p du \right)\\
 &\leq & C \left( \int_0^t s^{(\alpha-1)q}e^{-\eta q s}ds\right)^{p/q}\left( \int_0^T |Y_u|^p du \right)\\
 &\leq& C  \int_0^T |Y_u|^p du. 
\end{eqnarray*}
Thus we obtain, thanks to the BDG inequality,
\begin{eqnarray*}
 \mathbb{E} \left[\sup_{0 \leq t \leq T} \left|\int_0^t e^{(t-s)A}GdW_s\right|^p\right] & \leq & C \int_0^T \mathbb{E} \left[  |Y_u|^p  \right] du\\
 &\leq & C T \sup_{0 \leq u \leq T} \mathbb{E} \left[  |Y_u|^p  \right] \\
 &\leq & C T \sup_{0 \leq u \leq T} \left(\int_0^u (u-s)^{-2\alpha-2\gamma} e^{-(u-s)\eta}ds\right)^{p/2}\\
  &\leq & C T \sup_{0 \leq u \leq T} \left(\int_0^u v^{-2\alpha-2\gamma} e^{-v\eta}dv \right)^{p/2}\\
 &\leq & C T.
\end{eqnarray*} 
\end{proof}



We define the Kolmogorov semigroup associated to Eq. $(\ref{SDE})$ as follows: $\forall \phi : H \rightarrow \R$ measurable with polynomial growth
\begin{align*}
\Pt[\phi](x) = \E \phi(X_t^x).
\end{align*}

\begin{lemma}[Basic Coupling Estimates]\label{basic coupling estimates}
Assume that Hypothesis \ref{hypo SDE non degenerate} holds true and that $F$ is a bounded and Lipschitz function. Then there exist $\hat{c}>0$ and $\hat{\eta} > 0$ such that for all $\phi : H \rightarrow \R$ measurable with polynomial growth (i.e. $\exists C, \mu > 0$ such that $\forall x \in H$, $|\phi(x)| \leq C(1+|x|^{\mu})$), $\forall x, y \in H$,
\begin{align}\label{Basic Coupling Estimates}
|\Pt[\phi](x) - \Pt[\phi](y)| \leq \hat{c}(1+|x|^{1+\mu}+|y|^{1+\mu})e^{-\hat{\eta}t}.
\end{align}
We stress the fact that $\hat{c}$ and $\hat{\eta}$ depend on $F$ only through $\sup_{t \geq 0}\sup_{x \in H}|F(t,x)|$.
\end{lemma}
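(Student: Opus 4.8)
The plan is to prove the estimate by a coupling argument carried out through a change of measure, in the spirit of the analogous result in \cite{DEBUSSCHE_HU_TESSITORE_EBSDE_WEAK_DISSIPATIVE}: the exponential rate will come from the dissipativity of $A$, the coupling mechanism from the invertibility of $G$ together with the Hilbert--Schmidt property of $e^{sA}G$ (which makes the underlying Ornstein--Uhlenbeck transition a non-degenerate Gaussian), and the polynomial weight from the moment bounds of Lemma~\ref{lemma existence uniqueness SDE}. First I would dispose of the range $t\le 1$ trivially: by the polynomial growth of $\phi$ and \eqref{estimate E sup |X_t|}, $|\Pt[\phi](x)-\Pt[\phi](y)|\le \E|\phi(X_t^x)|+\E|\phi(X_t^y)|\le C(1+|x|^{\mu}+|y|^{\mu})$, which already has the announced form since $e^{-\hat{\eta}t}\ge e^{-\hat{\eta}}$ on $[0,1]$. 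So $t$ may be taken large, and by the (possibly time-inhomogeneous) Markov property it suffices to control a single fixed time-step $\tau$ of the transition kernel and then to iterate $\lfloor t/\tau\rfloor$ times.

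The core step is a one-step contraction for the fixed-horizon transition. Over the fixed time $\tau$, write the law of $X_\tau^x$ as the Gaussian $N(e^{\tau A}x,\Lambda_\tau)$, with $\Lambda_\tau=\int_0^\tau e^{sA}GG^*e^{sA^*}\,ds$ of trace class by Hypothesis~\ref{hypo SDE non degenerate}(2.), reweighted by the Girsanov density associated with the \emph{bounded} drift $F$; the crucial observation is that all $L^p$-moments of this density over the \emph{fixed} time $\tau$ are controlled by quantities involving $F$ only through $\sup_{t,x}|F(t,x)|$ and $|G^{-1}|$, never through the Lipschitz constant of $F$. Using that $e^{\tau A}(x-y)\in\mathrm{Im}(\Lambda_\tau^{1/2})$ with $|\Lambda_\tau^{-1/2}e^{\tau A}(x-y)|\le C\tau^{-1}|x-y|$ --- a null-controllability estimate coming from the dissipativity of $A$ and the invertibility of $G$ --- together with the boundedness of the density, I would derive a small-set minorization $\mathrm{Law}(X_\tau^x)\ge \varepsilon_R\,\nu_R$ for a fixed probability measure $\nu_R$ and some $\varepsilon_R>0$, uniformly for $|x|\le R$, and a Foster--Lyapunov drift $\E[V(X_\tau^x)]\le\theta\,V(x)+b$ with $V(x)=1+|x|^{1+\mu}$ and $\theta<1$, the latter obtained from \eqref{estimate E sup |X_t|} after isolating the term $e^{\tau A}x$ so that $\theta\sim(Me^{-\eta\tau})^{1+\mu}$ for $\tau$ large. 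The standard weighted Harris (coupling) argument then turns these two ingredients into a genuine contraction of the fixed-step transition in the weighted norm $\|\psi\|_V:=\sup_x|\psi(x)|/V(x)$; iterating and noting that $\|\mathscr{P}_{t-\tau}[\phi]\|_V\le C$ whenever $|\phi(\cdot)|\le C(1+|\cdot|^{\mu})$ (again by \eqref{estimate E sup |X_t|}) yields $|\Pt[\phi](x)-\Pt[\phi](y)|\le \hat{c}(1+|x|^{1+\mu}+|y|^{1+\mu})e^{-\hat{\eta}t}$, with $\hat{c}$ and $\hat{\eta}$ depending on $F$ only through $\sup|F|$ (it enters through $\varepsilon_R$ and the Girsanov moments).

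I expect two obstacles. The first, and decisive, one is the interplay between the change of measure and the long horizon: a one-shot Girsanov coupling of $X^x$ and $X^y$ directly on $[0,t]$ produces a Radon--Nikodym density whose $L^2$ norm grows like $e^{c\,(\sup|F|)^2 t}$, which annihilates any exponential decay; this forces the change of measure to be confined to the bounded window $[0,\tau]$, where its moments are harmless and $\sup|F|$-controlled, and makes the decay in $t$ come from the iteration, i.e.\ from the mixing of the Ornstein--Uhlenbeck part rather than from the coupling itself. The second, more technical, obstacle is the quantitative minorization: producing an explicit $\varepsilon_R>0$ requires a lower bound on the conditional law of the Girsanov factor of $F$ given the endpoint of the Ornstein--Uhlenbeck bridge, which I would get by restricting to an event of positive probability on which $\int_0^\tau|G^{-1}F|^2$ stays controlled --- and it is here that the invertibility of $G$ in Hypothesis~\ref{hypo SDE non degenerate}(5.) is used in an essential way.
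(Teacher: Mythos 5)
Your strategy is sound and would prove the lemma, but it is a genuinely different packaging from the paper's. The paper's proof is two lines: it invokes Theorem 2.4 of \cite{DEBUSSCHE_HU_TESSITORE_EBSDE_WEAK_DISSIPATIVE}, which builds an actual coupling of $X^x$ and $X^y$ on a common probability space with $\Pb(X_t^x\neq X_t^y)\le \hat c(1+|x|^2+|y|^2)e^{-\tilde\eta t}$, and then passes to polynomially growing $\phi$ by Cauchy--Schwarz, $|\Pt[\phi](x)-\Pt[\phi](y)|\le \sqrt{\E|\phi(X_t^x)-\phi(X_t^y)|^2}\,\sqrt{\Pb(X_t^x\neq X_t^y)}$; the two factors contribute the weights $1+|x|^\mu+|y|^\mu$ and $(1+|x|+|y|)e^{-\tilde\eta t/2}$, whence the exponent $1+\mu$ and $\hat\eta=\tilde\eta/2$. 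Your weighted-Harris argument (Cameron--Martin minorization on a small set plus a Foster--Lyapunov drift for $V=1+|x|^{1+\mu}$) uses the same underlying ingredients --- Girsanov confined to a fixed window $\tau$ so that its moments depend on $F$ only through $\sup|F|$ and $|G^{-1}|$, dissipativity driving the return to the small set, decay coming from iteration --- but reaches the $V$-weighted contraction directly, with no explicit coupling event and no Cauchy--Schwarz step. The paper's route buys brevity and a reusable pathwise coupling probability; yours buys self-containedness modulo the standard Harris theorem. Two harmless slips: the null-controllability bound is $|\Lambda_\tau^{-1/2}e^{\tau A}|\le C\tau^{-1/2}$ rather than $C\tau^{-1}$ (immaterial for fixed $\tau$), and you should take the minorization measure $\nu_R$ supported in a ball so that the conditional lower bound on the Girsanov factor given the bridge endpoint is uniform.
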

\begin{proof}
In the same manner as in the proof of Theorem $2.4$ in \cite{DEBUSSCHE_HU_TESSITORE_EBSDE_WEAK_DISSIPATIVE}, we obtain, for every $x,y \in H$,
\begin{align*}
\Pb(X_t^x \neq X_t^y ) \leq \hat{c}(1+|x|^2+|y|^2)e^{-\tilde{\eta}t}.
\end{align*}
Hence we obtain, for every $x,y \in H$ and $\phi : H \rightarrow \R$ measurable and such that $\forall x \in H$, $|\phi(x)| \leq C(1+|x|^{\mu})$, 
\begin{align*}
|\Pt[\phi](x) - \Pt[\phi](y)| &\leq \sqrt{\E(|\phi(X_t^x) - \phi(X_t^y)|^2)} \sqrt{\Pb(X_t^x \neq X_t^y)} \\
&\leq C(1+|x|^\mu + |y|^{\mu})(1+|x| + |y|)e^{-(\tilde{\eta}/2) t}\\
&\leq C(1+|x|^{1+\mu} + |y|^{1+\mu})e^{-\hat{\eta}t}.
\end{align*}
\end{proof}

\begin{corollary}\label{basic coupling estimates extended}
Relation (\ref{Basic Coupling Estimates}) can be extended to the case in which $F$ is only bounded measurable and for all $t \geq 0$,  there exists a uniformly bounded sequence of Lipschitz functions in $x$ $(F_n(t,\cdot))_{n \geq 1}$ (i.e. $\forall t \geq 0, \forall n \in \N$, $F_n(t,\cdot)$ is Lipschitz and $\sup_n \sup_t \sup_x |F_n(t,x)| < +\infty$ ) such that 
\begin{align*}
\lim_n F_n(t,x) = F(t,x),~~~~~ \forall t \geq 0, \forall x \in H.
\end{align*}
Clearly in this case in the definition of $\Pt[\phi]$ the mean value is taken with respect to the new probability $\widehat{\Pb}$.
\end{corollary}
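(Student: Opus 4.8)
The plan is to combine Girsanov's theorem with the uniformity, in the drift, of the constants furnished by Lemma~\ref{basic coupling estimates}. Fix $x \in H$ and $t>0$, and let $X^0$ be the solution of \eqref{SDE} with $F \equiv 0$, that is, the Ornstein--Uhlenbeck process $X^0_s = e^{sA}x + \int_0^s e^{(s-r)A}G\,\der W_r$; crucially $X^0$ does not depend on the drift. Under Hypothesis~\ref{hypo SDE non degenerate}~(5.) the process $G^{-1}F_n(s,X^0_s)$ is well defined and bounded by the deterministic constant $K := |G^{-1}|_{L(H,\Xi)}\,\sup_m\sup_r\sup_y |F_m(r,y)| < \infty$, uniformly in $n$, and the same bound holds for $G^{-1}F(s,X^0_s)$. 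Hence Novikov's condition is trivially satisfied and I would introduce the probability measures $\widehat{\Pb}^n$ and $\widehat{\Pb}$ on $\F_t$ with densities
\begin{align*}
\rho_n = \exp\left(\int_0^t \langle G^{-1}F_n(s,X^0_s),\der W_s\rangle - \frac12\int_0^t |G^{-1}F_n(s,X^0_s)|^2\,\der s\right)
\end{align*}
and $\rho$, defined analogously with $F$ in place of $F_n$. By Girsanov's theorem, under $\widehat{\Pb}^n$ the process $W_s - \int_0^s G^{-1}F_n(r,X^0_r)\,\der r$ is a cylindrical Wiener process and $X^0$ solves \eqref{SDE} with drift $F_n$; thus the law of the solution $X^{x,n}$ of \eqref{SDE} with drift $F_n$ coincides with the law of $X^0$ under $\widehat{\Pb}^n$, and likewise the martingale solution associated with $F$ has the law of $X^0$ under $\widehat{\Pb}$. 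Denoting by $\mathscr{P}^n_t$ the Kolmogorov semigroup associated with $F_n$, this yields, for any measurable $\phi$ of polynomial growth,
\begin{align*}
\mathscr{P}^n_t[\phi](x) = \E\big[\phi(X^0_t)\,\rho_n\big], \qquad \Pt[\phi](x) = \E\big[\phi(X^0_t)\,\rho\big].
\end{align*}

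Next I would let $n \to \infty$ in this representation. Since $F_n(s,X^0_s) \to F(s,X^0_s)$ for every $s$ and $\omega$, with the uniform bound $K$, dominated convergence gives $\int_0^t |G^{-1}F_n(s,X^0_s)|^2\,\der s \to \int_0^t |G^{-1}F(s,X^0_s)|^2\,\der s$ almost surely and, by It\^o's isometry, $\int_0^t \langle G^{-1}F_n(s,X^0_s),\der W_s\rangle \to \int_0^t \langle G^{-1}F(s,X^0_s),\der W_s\rangle$ in $L^2(\Pb)$; hence $\rho_n \to \rho$ in probability. Standard estimates on exponential martingales with quadratic variation bounded by $K^2 t$ give $\sup_n \E[\rho_n^q] < \infty$ for every $q \geq 1$, so $(\rho_n)_n$ is uniformly integrable; moreover $X^0_t$ is Gaussian, so $\phi(X^0_t) \in \bigcap_{q\geq1} L^q(\Pb)$. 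By H\"older's inequality $(\phi(X^0_t)\rho_n)_n$ is then bounded in $L^{1+\varepsilon}(\Pb)$ for some $\varepsilon>0$, hence uniformly integrable, and it converges to $\phi(X^0_t)\rho$ in probability; therefore $\mathscr{P}^n_t[\phi](x) \to \Pt[\phi](x)$ for every $x \in H$, and the same holds at $y$.

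Finally, each $F_n$ is bounded (with $\sup_t\sup_x|F_n(t,x)|$ bounded independently of $n$) and Lipschitz in $x$, so Lemma~\ref{basic coupling estimates} applies to $\mathscr{P}^n_t$ and, by its last assertion, produces constants $\hat c, \hat\eta>0$ that do not depend on $n$ and satisfy
\begin{align*}
|\mathscr{P}^n_t[\phi](x) - \mathscr{P}^n_t[\phi](y)| \leq \hat c\,(1+|x|^{1+\mu}+|y|^{1+\mu})\,e^{-\hat\eta t}, \qquad \forall n \geq 1,\ \forall x,y \in H.
\end{align*}
Passing to the limit $n\to\infty$ with the convergence established above yields \eqref{Basic Coupling Estimates} for the Kolmogorov semigroup of the martingale solution associated with $F$. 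I expect the main obstacle to be the limiting argument of the second paragraph, namely securing enough uniform integrability to pass from convergence in probability of the Girsanov densities $\rho_n$ to convergence of $\E[\phi(X^0_t)\rho_n]$; everything else is a routine application of Girsanov's theorem and of Lemma~\ref{basic coupling estimates}.
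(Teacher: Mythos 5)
Your proposal is correct and follows essentially the same route as the paper: reduce the statement to the pointwise convergence $\mathscr{P}_t^n[\phi](x) \to \mathscr{P}_t[\phi](x)$ and then exploit that the constants $\hat c,\hat\eta$ of Lemma \ref{basic coupling estimates} depend on $F_n$ only through $\sup_n\sup_t\sup_x|F_n(t,x)|$, hence are uniform in $n$. The paper defers the convergence step to Corollary 2.5 of the cited reference, and your Girsanov-density representation with uniform integrability of the $\rho_n$ is precisely the standard way that step is carried out.
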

\begin{proof}
It is enough to show that if $\mathscr{P}^n$ is the Kolmogorov semigroup corresponding to equation ($\ref{SDE}$) but with $F$ replaced by $F_n$, then  $\forall x \in H$ and $\forall t \geq 0$, 
\begin{align*}
\mathscr{P}_t^n[\phi](x) \underset{n \rightarrow + \infty}{\longrightarrow} \mathscr{P}_t[\phi](x).
\end{align*}
See the proof of Corollary $2.5$ in \cite{DEBUSSCHE_HU_TESSITORE_EBSDE_WEAK_DISSIPATIVE} for more details.
\end{proof}
\begin{rema}\label{remarque F m,n}
Similarly, if for every $t \geq 0$, there exists a uniformly bounded sequence of Lipschitz functions $(F_{m,n}(t,\cdot)_{m \in \N, n \in \N}$ (i.e. $\forall t \geq 0$, $\forall n \in \N$, $\forall m \in \N$, $F_{m,n}(t,\cdot)$ is Lipschitz and $\sup_m \sup_n \sup_t \sup_x |F_{m,n}(x)| < + \infty$) such that
\begin{align*}
\lim_n \lim_m F_{m,n}(t,x) = F(t,x),~~~~\forall t \geq 0, \forall x \in H, 
\end{align*}
then, if $\mathscr{P}^{m,n}$ is the Kolmogorov semigroup corresponding to equation ($\ref{SDE}$) but with $F$ replaced by $F_{m,n}$, we have  $\forall x \in H$ and $\forall t \geq 0$, 
\begin{align*}
\lim_n \lim_m  \mathscr{P}_t^{m,n}[\phi](x) = \mathscr{P}_t[\phi](x).
\end{align*}
\end{rema}

We will need to apply the lemma above to some functions with particular form.

\begin{lemma}\label{functions approchée par suite uniforme de fonctions Lipchitz}
Let $f : H \times \Xi^* \rightarrow \R$ be continuous in the first variable and Lipschitz in the second one and $\zeta, \zeta' : \R_+ \times H \rightarrow \Xi^*$ be such that for all $s \geq 0$, $\zeta(s,\cdot)$ and $\zeta'(s,\cdot)$ are weakly* continuous. We define 
\begin{align*}
\Upsilon(s,x) = 
\left\{
\begin{array}{ll}
\frac{f(x,\zeta(s,x)) - f(x,\zeta'(s,x))}{|\zeta(s,x)-\zeta'(s,x)|^2}(\zeta(s,x) - \zeta'(s,x))^*, & \text{if } \zeta(s,x) \neq \zeta'(s,x),\\
0, &\text{if } \zeta(s,x) = \zeta'(s,x).
\end{array}
\right.
\end{align*}
There exists a uniformly bounded sequence of Lipschitz functions $(\Upsilon_{m,n}(s,\cdot))_{m \in \N^*, n \in \N^*}$ (i.e. $\forall m \in \N^*, \forall n \in \N^*$, $\Upsilon_{m,n}(s,\cdot)$ is Lipschitz and ${\sup_m\sup_n \sup_s} \sup_x |\Upsilon_{m,n}(s,x)| < \infty$) such that 
\begin{align*}
\lim_n \lim_m \Upsilon_{m,n}(s,x) = \Upsilon(s,x), ~~~\forall s \geq 0,\forall x \in H.
\end{align*}
\end{lemma}
\begin{proof}
See the proof of Lemma $3.5$ in \cite{DEBUSSCHE_HU_TESSITORE_EBSDE_WEAK_DISSIPATIVE}.
\end{proof}

\subsection{The BSDE}
Let us fix $T >0$ and let us consider the following BSDE in finite horizon for an unknown process $(Y_s^{T,t,x},Z_s^{T,t,x})_{s \in [t,T]}$ with values in $\R \times \Xi^{*}$:
\begin{align}\label{BSDE}
Y_s^{T,t,x} = \xi^T + \int_s^T f(X_r^{t,x},Z_r^{T,t,x}) \der r - \int_s^T Z_r^{T,t,x} \der W_r, ~~~\forall s \in [t,T],
\end{align}
where $(X_s^{t,x})_{s \geq 0}$ is the mild solution of (\ref{SDE}) starting from $x$ at time $t \geq 0$. If $t=0$, we use the following standard notations $Y_s^{T,x} := Y_s^{T,0,x}$ and $Z_s^{T,x} := Z_s^{T,0,x}$.

We will assume the following assumptions.
\begin{hypo}[Path dependent case]\label{hypo BSDE 1} There exist $l > 0$, $\mu \geq 0$ such that the function $f : H \times \Xi^* \rightarrow \R$ and $\xi^T$ satisfy:
\begin{enumerate}
\item $F : H \rightarrow H$ is a Lipschitz bounded function and belongs to the class $\Ga$,
\item $\xi^T$ is an $H$ valued random variable $\F_T$ measurable and there exists $\mu \geq 0$ such that $|\xi^T| \leq C(1+\sup_{t \leq s \leq T}|X_s^x|^\mu)$,
\item $\forall x \in H$, $\forall z, z' \in \Xi^*$, $|f(x,z) - f(x,z')| \leq l |z-z'|$,
\item $f(\cdot,z)$ is continuous and $\forall x \in H$, $|f(x,0)| \leq C(1+|x|^\mu)$.
\end{enumerate}
\end{hypo}

\begin{lemma}\label{lemma existence uniqueness BSDE}
Assume that Hypotheses \ref{hypo SDE non degenerate} and \ref{hypo BSDE 1} hold true then there exists a unique solution $(Y^{T,t,x},Z^{T,t,x}) \in L_\mathscr{P}^p(\Omega, \mathscr{C}([t,T];\R)) \times {L_\mathscr{P}^p(\Omega, {L}^2([t,T];\Xi^*))}$ for all $p \geq 2$ to the BSDE (\ref{BSDE}).
\end{lemma}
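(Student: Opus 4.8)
The plan is to regard \eqref{BSDE} as a classical Lipschitz backward SDE with a \emph{random} generator and then to bootstrap the solution from $L^2$ to $L^p$, $p\ge 2$. \emph{Step 1: reduction and integrability of the data.} Fix $t$ and $x$. By Hypothesis \ref{hypo BSDE 1}(1), $F$ is Lipschitz and bounded, so Lemma \ref{lemma existence uniqueness SDE} provides a genuine strong solution $X^{t,x}$ of \eqref{SDE} with continuous, $\F$-adapted paths satisfying the moment bound \eqref{estimate E sup |X_t| 2}. Set $g(r,\omega,z):=f(X_r^{t,x}(\omega),z)$ for $r\in[t,T]$, $z\in\Xi^*$; since $f$ is continuous in $x$ and Lipschitz in $z$ (hence jointly continuous) and $X^{t,x}$ is predictable, $g$ is $\mathscr{P}\otimes\mathscr{B}(\Xi^*)$-measurable, and by Hypothesis \ref{hypo BSDE 1}(3) it is $l$-Lipschitz in $z$ uniformly in $(r,\omega)$. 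From Hypothesis \ref{hypo BSDE 1}(4) and \eqref{estimate E sup |X_t| 2} applied with exponent $p\mu\vee 2$ (together with Jensen's inequality when $p\mu<2$),
\[
\E\!\left[\left(\int_t^T |g(r,\cdot,0)|\,\der r\right)^{\!p}\right]\le C\,T^p\,\E\!\left[1+\sup_{t\le r\le T}|X_r^{t,x}|^{p\mu}\right]<\infty ,
\]
and likewise $\E|\xi^T|^p\le C\,\E[1+\sup_{t\le r\le T}|X_r^{t,x}|^{p\mu}]<\infty$ by Hypothesis \ref{hypo BSDE 1}(2). Thus the terminal datum and the free term of \eqref{BSDE} lie in the appropriate $L^p$ spaces.

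\emph{Step 2: well-posedness for $p=2$.} I would apply the Banach fixed point theorem on the Hilbert space $L_\mathscr{P}^2(\Omega,\mathscr{C}([t,T];\R))\times L_\mathscr{P}^2(\Omega,L^2([t,T];\Xi^*))$. Given an input $V$, the process $M_s:=\E\!\big[\xi^T+\int_t^T g(r,\cdot,V_r)\,\der r\,\big|\,\F_s\big]$ is a square-integrable $\F_s$-martingale, so the martingale representation theorem for the $\Xi$-valued cylindrical Wiener process (see \cite{DA_PRATO_ZABCZYK_STOCHASTIC_EQUATIONS_INFINITE_DIMENSION}) yields a unique $\tilde Z\in L_\mathscr{P}^2(\Omega,L^2([t,T];\Xi^*))$ with $M_s=M_t+\int_t^s\tilde Z_r\,\der W_r$; then $\tilde Y_s:=\E\!\big[\xi^T+\int_s^T g(r,\cdot,V_r)\,\der r\,\big|\,\F_s\big]$ together with $\tilde Z$ solves \eqref{BSDE} with generator $g(r,\cdot,V_r)$, and this defines a map $\Gamma:V\mapsto(\tilde Y,\tilde Z)$ whose fixed points are exactly the solutions. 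Using Itô's formula for $e^{\beta s}|\tilde Y^1_s-\tilde Y^2_s|^2$ (which is real-valued, so that no infinite-dimensional Itô formula is needed) and the $l$-Lipschitz bound on $g$ in $z$, $\Gamma$ becomes a strict contraction for the equivalent norm $\big(\E\int_t^T e^{\beta r}(|Y_r|^2+|Z_r|^2)\,\der r\big)^{1/2}$ once $\beta$ is large enough in terms of $l$; alternatively one solves first on a short interval $[T-\delta,T]$ and patches. This produces the unique solution in the class with $p=2$.

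\emph{Step 3: from $p=2$ to $p>2$, and uniqueness.} It remains to show that the $L^2$ solution already lies in $L_\mathscr{P}^p(\Omega,\mathscr{C}([t,T];\R))\times L_\mathscr{P}^p(\Omega,L^2([t,T];\Xi^*))$ for every $p>2$. This follows from the standard $L^p$ a priori estimate
\[
\E\!\left[\sup_{s\in[t,T]}|Y_s|^p\right]+\E\!\left[\left(\int_t^T|Z_r|^2\,\der r\right)^{\!p/2}\right]\le C\,\E\!\left[|\xi^T|^p+\left(\int_t^T|f(X_r^{t,x},0)|\,\der r\right)^{\!p}\right],
\]
obtained by applying Itô's formula to $(|Y_s|^2+\varepsilon)^{p/2}$, using the Lipschitz bound $|f(x,z)|\le|f(x,0)|+l|z|$ and the nonnegativity of the second-order term, then the Burkholder--Davis--Gundy and Young inequalities to absorb the martingale contribution and the quadratic $Z$-term into the left-hand side, and finally letting $\varepsilon\downarrow 0$; the right-hand side is finite by Step 1. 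Uniqueness in the $L^p$ class is inherited from uniqueness in the $L^2$ class, since $\Omega$ carries a probability measure and hence $L^p\subset L^2$.

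The main obstacle is the $L^p$ energy estimate of Step 3: one must run a localisation argument so that all stochastic integrals are true (not merely local) martingales before taking expectations, and carry out the Burkholder--Davis--Gundy/Young bookkeeping carefully so that the constant $C$ depends only on $p$, $l$ and $T$ and not on the solution itself. By contrast, the fact that $f$ is only continuous (and not Lipschitz) in $x$ is harmless here, because $X^{t,x}$ enters \eqref{BSDE} solely as a fixed adapted input process.
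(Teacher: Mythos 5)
Your argument is correct and is essentially the standard proof that the paper delegates to Proposition 4.3 of \cite{FUHRMAN_TESSITORE_NONLINEAR_KOLMOGOROV_INFINITE_DIMENSION}: treat $f(X^{t,x}_\cdot,\cdot)$ as a random Lipschitz generator with $L^p$-integrable data (via the moment bound \eqref{estimate E sup |X_t| 2}), solve by contraction using martingale representation for the cylindrical Wiener process, and bootstrap to $p>2$ with the classical $L^p$ a priori estimates. No discrepancy with the paper's route; the only point to execute carefully is the localisation in the $L^p$ estimate, which you already flag.
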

\begin{proof}
See \cite{FUHRMAN_TESSITORE_NONLINEAR_KOLMOGOROV_INFINITE_DIMENSION}, Proposition $4.3$.
\end{proof}

We recall here the link between solutions of such BSDEs and PDEs which will justify our probabilistic approach. For this purpose we will consider the following set of Markovian hypotheses. Note that this set of hypotheses is a particular case of Hypothesis \ref{hypo BSDE 1}.
\begin{hypo}[Markovian case]\label{hypo BSDE Markov 1} There exist $l > 0$, $\mu \geq 0$ such that the function $f : H \times \Xi^* \rightarrow \R$ and $\xi^T$ satisfy:
\begin{enumerate}
\item $F : H \rightarrow H$ is a Lipschitz bounded function that belongs to the class $\Ga$,
\item $\xi^T = g(X_T^{t,x})$, where $g : H \rightarrow \R$ is continuous and have polynomial growth: for all $x \in H$, $|g(x)| \leq C(1+|x|^\mu)$,
\item $\forall x \in H$, $\forall z, z' \in \Xi^*$, $|f(x,z) - f(x,z')| \leq l |z-z'|$,
\item $f(\cdot,z)$ is continuous and $\forall x \in H$, $|f(x,0)| \leq C(1+|x|^\mu)$.
\end{enumerate}
\end{hypo}

We recall the concept of mild solution. We consider the HJB equation
\begin{align}\label{HJB cauchy}
\left\{ 
\begin{array}{ll}
\frac{\partial u(t,x)}{\partial t} + \LL u(t,x) + f(x,\nabla u(t,x)G) =0 , & \forall (t,x) \in \R_+ \times H,\\
u(T,x) = g(x),&\forall x \in H,
\end{array}
\right.
\end{align}
where $\LL u(t,x) = \frac{1}{2}\tr (G G^* \nabla^2 u(t,x)) + \langle Ax + F(x),\nabla u(t,x) \rangle$. We can define the semigroup $(\Pt)_{t \geq 0}$ corresponding to $X$ by the formula $\Pt[\phi](x) = \E\phi(X_t^x)$ for all measurable functions $\phi : H \rightarrow \R$ having polynomial growth, and we notice that $\LL$ is the formal generator of $\Pt$.
We give the definition of a mild solution of equation (\ref{HJB cauchy}):
\begin{definition}
We say that a continuous function $u : [0,T]\times H \rightarrow \R$ is a mild solution of the HJB equation (\ref{HJB cauchy}) if the following conditions hold:
\begin{enumerate}
\item $u\in \mathscr{G}^{0,1}([0,T]\times H , \R) $;
\item There exist some constant $C>0$ and some real function $k$ satisfying $\int_0^T k(t) \der t < + \infty$ such that for all $x \in H$, $h \in H$, $t \in [0,T)$ we have
\begin{align*}
|u(t,x)| \leq C(1+|x|^C), ~~~~ |\nabla u(t,x) h| \leq C|h|k(t)(1+|x|^C);
\end{align*}
\item the following equality holds:
\begin{align*}
u(t,x) = \mathscr{P}_{T-t}[g](x) + \int_t^T \mathscr{P}_{s-t}[f(\cdot,\nabla u(t,\cdot) G)](x) \der s, ~~~~\forall t \in [0,T], ~~~~\forall x \in H.
\end{align*}
\end{enumerate}
\end{definition}

\begin{lemma}
Assume that Hypotheses  \ref{hypo SDE non degenerate} and \ref{hypo BSDE Markov 1} hold true, then there exists a unique mild solution $u$ of the HJB equation (\ref{HJB cauchy}) given by the formula 
\begin{align*}
u_T(t,x) = Y_t^{T,t,x}.
\end{align*}
\end{lemma}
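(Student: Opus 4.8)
The statement is a nonlinear Feynman--Kac representation in the spirit of \cite{FUHRMAN_TESSITORE_NONLINEAR_KOLMOGOROV_INFINITE_DIMENSION}: the plan is to check that $u(t,x):=Y_t^{T,t,x}$ is a well-defined deterministic function of the right class, that it satisfies the mild integral equation, and that it is the only such function.

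\emph{Step 1: the candidate is a deterministic function with polynomial growth, and is continuous.} By Lemma \ref{lemma existence uniqueness BSDE} the BSDE (\ref{BSDE}) with $\xi^T=g(X_T^{t,x})$ has a unique solution $(Y^{T,t,x},Z^{T,t,x})\in L_\mathscr{P}^p(\Omega,\mathscr{C}([t,T];\R))\times L_\mathscr{P}^p(\Omega,L^2([t,T];\Xi^*))$ for every $p\ge 2$. Since $Y_t^{T,t,x}$ is $\F_t$-measurable while at the same time it depends only on the increments $(W_r-W_t)_{t\le r\le T}$, it is measurable with respect to a trivial $\sigma$-field, hence deterministic, so $u(t,x)$ is well defined. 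Standard a priori $L^p$ estimates for BSDEs, combined with the polynomial growth of $g$ and $f(\cdot,0)$ in Hypothesis \ref{hypo BSDE Markov 1} and the moment bounds (\ref{estimate E sup |X_t|})--(\ref{estimate E sup |X_t| 2}) for $X$, yield $|u(t,x)|\le C(1+|x|^C)$. Continuity of $(t,x)\mapsto u(t,x)$ follows from continuous dependence of the solutions of (\ref{SDE}) and (\ref{BSDE}) on $x$ and on the time origin.

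\emph{Step 2: Gâteaux differentiability and identification of $Z$.} First treat the case where $f$ and $g$ are, in addition, Gâteaux differentiable with bounded and strongly continuous derivatives (in particular Lipschitz). Then the flow $x\mapsto X^{t,x}$ is Gâteaux differentiable in the class $\Ga$, and differentiating the BSDE (via the results of \cite{FUHRMAN_TESSITORE_NONLINEAR_KOLMOGOROV_INFINITE_DIMENSION} on differentiability of BSDEs with respect to a parameter) one obtains that $x\mapsto(Y^{T,t,x},Z^{T,t,x})$ is differentiable, that $u\in\mathscr{G}^{0,1}([0,T]\times H,\R)$ with $|\nabla u(t,x)h|\le C|h|k(t)(1+|x|^C)$ for a suitable $k$ with $\int_0^T k<\infty$ (the integrable time singularity coming only from Remark \ref{remarque sur e^{sA} G(x) Hilbert Schmidt}), and the crucial identification
\begin{align*}
Z_s^{T,t,x}=\nabla u(s,X_s^{t,x})\,G,\qquad \der s\otimes\der\Pb\text{-a.e.}
\end{align*}
The general case follows by approximating $f$ and $g$ by such smooth functions, passing to the limit in the BSDE via stability estimates uniform in the approximation parameters, and checking that the above bounds and the representation of $Z$ survive in the limit. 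This is the technical heart of the argument.

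\emph{Step 3: the mild equation.} Using the Markov property of $X$ one has $Y_t^{T,t,x}=\E[g(X_T^{t,x})]+\E\int_t^T f(X_r^{t,x},Z_r^{T,t,x})\der r$; inserting $Z_r^{T,t,x}=\nabla u(r,X_r^{t,x})G$, using $\E[\phi(X_r^{t,x})]=\mathscr{P}_{r-t}[\phi](x)$ and Fubini's theorem (justified by the growth bounds of Steps 1--2 and the moment estimates for $X$) gives exactly
\begin{align*}
u(t,x)=\mathscr{P}_{T-t}[g](x)+\int_t^T\mathscr{P}_{r-t}\big[f(\cdot,\nabla u(r,\cdot)G)\big](x)\,\der r,
\end{align*}
so $u$ is a mild solution.

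\emph{Step 4: uniqueness.} Given any mild solution $\tilde u$, set $\tilde Y_s:=\tilde u(s,X_s^{t,x})$ and $\tilde Z_s:=\nabla\tilde u(s,X_s^{t,x})G$. Applying an Itô-type formula valid for mild solutions --- obtained by regularizing $\tilde u$ through the resolvent of $A$ (a Yosida-type approximation) and passing to the limit, as in \cite{FUHRMAN_TESSITORE_NONLINEAR_KOLMOGOROV_INFINITE_DIMENSION} --- one shows that $(\tilde Y,\tilde Z)$ solves the BSDE (\ref{BSDE}) with terminal value $g(X_T^{t,x})$. By the uniqueness part of Lemma \ref{lemma existence uniqueness BSDE}, $(\tilde Y,\tilde Z)=(Y^{T,t,x},Z^{T,t,x})$, and evaluating at $s=t$ gives $\tilde u(t,x)=Y_t^{T,t,x}=u(t,x)$.

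\emph{Main obstacle.} The delicate points are Step 2 (the $\mathscr{G}^{0,1}$-regularity of $u$ together with the identity $Z=\nabla u\,G$) and the mild Itô formula in Step 4; both rely on the approximation machinery of \cite{FUHRMAN_TESSITORE_NONLINEAR_KOLMOGOROV_INFINITE_DIMENSION} and on uniform control of the integrable singularity near $s=t$ produced by the Hilbert--Schmidt bound on $e^{sA}G$.
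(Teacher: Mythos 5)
The paper does not actually prove this lemma: its proof is the single line ``See Theorem 4.2 in \cite{FUHRMAN_TESSITORE_THE_BISMUT_ELWORTHY_FORMULA}'', so you are being compared against that reference rather than against an argument in the text. Your overall architecture (deterministic candidate, identification $Z=\nabla u\,G$, mild equation via the semigroup, uniqueness via a mild It\^o formula plus BSDE uniqueness) is indeed the one used there and in \cite{FUHRMAN_TESSITORE_NONLINEAR_KOLMOGOROV_INFINITE_DIMENSION}.

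However, Step 2 as you describe it has a genuine gap, and it is precisely the point the cited theorem exists to fix. Under Hypothesis \ref{hypo BSDE Markov 1}, $g$ and $f(\cdot,z)$ are only \emph{continuous} with polynomial growth. If you smooth them, differentiate the BSDE with respect to $x$, and then pass to the limit, the gradient bound you obtain in the smooth case is controlled by $\|\nabla g_n\|$ and $\|\nabla_x f_n\|$ (the terminal condition of the differentiated BSDE is $\nabla g_n(X_T^{t,x})\nabla_x X_T^{t,x}h$), and these blow up as $g_n\to g$; the estimate $|\nabla u(t,x)h|\le C|h|k(t)(1+|x|^C)$ is therefore \emph{not} uniform in the approximation parameter, and differentiability of $u$ does not survive the limit by this route. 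Moreover the integrable singularity $k(t)\sim (T-t)^{-1/2}$ cannot come from Remark \ref{remarque sur e^{sA} G(x) Hilbert Schmidt} (which only controls the stochastic convolution); it comes from the smoothing of the semigroup, i.e.\ from the non-degeneracy assumption $G$ invertible in Hypothesis \ref{hypo SDE non degenerate}(5) via the Bismut--Elworthy formula $\nabla \mathscr{P}_t[\phi](x)h=\frac1t\,\E\bigl[\phi(X_t^x)\int_0^t\langle G^{-1}\nabla_x X_s^x h,\der W_s\rangle\bigr]$ and its BSDE analogue, which bound $\nabla u$ in terms of the sup/growth norm of $g$ and $f$ only, with the $(T-t)^{-1/2}$ singularity coming from $\E|U^h(T,t,x)|^2\le C|h|^2/(T-t)$. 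This is exactly the mechanism the present paper re-uses later in the proof of Lemma \ref{lemma estimates on w_T(x)}. Without this ingredient your Step 2 does not close, and Steps 3--4 (which need $Z=\nabla u\,G$ and the $\mathscr{G}^{0,1}$ regularity) do not get off the ground. Replacing the ``differentiate the BSDE and pass to the limit'' device by the Bismut--Elworthy representation, combined with a fixed point in a space of functions with a weighted-in-time gradient norm, repairs the argument and is what Theorem 4.2 of \cite{FUHRMAN_TESSITORE_THE_BISMUT_ELWORTHY_FORMULA} actually does.
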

\begin{proof}
See Theorem 4.2 in \cite{FUHRMAN_TESSITORE_THE_BISMUT_ELWORTHY_FORMULA}.
\end{proof}

\begin{rema}
By the following change of time: $\widetilde{u}_T(t,x) := u_T(T-t,x)$, we remark that $\widetilde{u}_T(t,x)$ is the unique mild solution of
 (\ref{HJB cauchy changement de temps}). Now, remark that $\widetilde{u}_T(T,x) =  u_T(0,x) = Y_0^{T,0,x} = Y_0^{T,x}$, therefore the large time behaviour of $Y_0^{T,x}$ is the same as that of the solution of equation (\ref{HJB cauchy changement de temps}).
\end{rema}

\subsection{The EBSDE}
Let us consider the following ergodic BSDE for an unknown process $(Y_t^x,Z_t^x,\lambda)_{t \geq 0}$ with values in $\R \times \Xi^* \times\R$:
\begin{align}\label{EBSDE}
Y_t^x = Y_T^x + \int_t^T (f(X_s^x,Z_s^x) - \lambda) \der s - \int_t^T Z_s^{x} \der W_s, ~~~\forall 0 \leq t \leq T < + \infty.
\end{align}

\begin{hypo}\label{hypo EBSDE} There exist $l > 0$, $\mu \geq 0$ such that the functions $F : H \rightarrow H$ and $f : H \times \Xi^* \rightarrow \R$ satisfy:
\begin{enumerate}
\item $F : H \rightarrow H$ is a Lipschitz bounded function and belongs to the class $\Ga$,
\item $\forall x \in H$, $\forall z, z' \in \Xi^*$, $|f(x,z) - f(x,z')| \leq l |z-z'|$,
\item $f(\cdot,z)$ is continuous and $\forall x \in H$, $|f(x,0)| \leq C(1+|x|^\mu)$.
\end{enumerate}
\end{hypo}

\begin{lemma}[Existence] \label{lemma existence EBSDE}
Assume that Hypotheses \ref{hypo SDE non degenerate} and \ref{hypo EBSDE} hold true then there exists a solution $(Y^x,Z^x,\lambda) \in L_{\mathscr{P},\text{loc}}^2(\Omega, \mathscr{C}([0,\infty[;\R)) \times L_{\mathscr{P},\text{loc}}^2(\Omega, {L}^2([0,\infty[;\Xi^*)) \times \R$, to the EBSDE (\ref{EBSDE}). Moreover there exists $v : H \rightarrow \R$ of class $\Ga$ such that, for all $x,x' \in H$, for all $t \geq 0$:
\begin{align*}
&Y_t^x = v(X_t^x)~ \text{ and }~~  Z_t^x = \nabla v(X_t^x) G, \\
&v(0) = 0,\\
&|v(x) - v(x')| \leq  C(1+|x|^{1+\mu} +|x'|^{1+\mu}), \\
&|\nabla v(x)| \leq C(1+|x|^{1+\mu}).
\end{align*}
\end{lemma}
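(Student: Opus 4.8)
The plan is to follow the now-standard approach of Fuhrman--Hu--Tessitore and Debussche--Hu--Tessitore: construct the solution of the EBSDE as a limit of solutions of infinite-horizon discounted BSDEs, using the basic coupling estimate of Lemma \ref{basic coupling estimates} as the essential compactness/uniform-bound ingredient. First I would fix $\alpha>0$ and, for each $x\in H$, consider the infinite-horizon BSDE with discount
\begin{align*}
Y_t^{\alpha,x} = Y_T^{\alpha,x} + \int_t^T\big(f(X_s^x,Z_s^{\alpha,x})-\alpha Y_s^{\alpha,x}\big)\,\der s - \int_t^T Z_s^{\alpha,x}\,\der W_s,
\end{align*}
whose unique bounded-in-the-right-sense solution exists by a fixed point argument (the discount term $-\alpha Y$ provides the contraction on the infinite horizon; here one uses that $f(\cdot,0)$ has polynomial growth together with the moment bound \eqref{estimate E sup |X_t|}). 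By the Markovian structure one has $Y_t^{\alpha,x}=v^\alpha(X_t^x)$ for a deterministic $v^\alpha\in\Ga$, and the key point is to estimate $v^\alpha$ and its Gâteaux derivative uniformly in $\alpha$.

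The two estimates needed are: a uniform bound on $\alpha v^\alpha(x)$ (which will produce $\lambda$), and a uniform Lipschitz-type bound on $v^\alpha(x)-v^\alpha(x')$ with the stated polynomial modulus $C(1+|x|^{1+\mu}+|x'|^{1+\mu})$. For the latter one writes the BSDE solved by the difference $Y_t^{\alpha,x}-Y_t^{\alpha,x'}$, linearises $f$ in the $z$-variable (introducing a bounded process that plays the role of a Girsanov drift), and performs the Girsanov change of measure so that the difference becomes, up to the discount, an expectation of $v^\alpha(X_T^x)-v^\alpha(X_T^{x'})$ under a new measure; letting $T\to\infty$ and invoking the coupling estimate \eqref{Basic Coupling Estimates} — applied to $v^\alpha$ which has polynomial growth, uniformly because the Girsanov drift is bounded uniformly in $\alpha$ by the Lipschitz constant $l$ of $f$ — kills the boundary term and yields the desired bound independently of $\alpha$. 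Since $v^\alpha(0)$ is controlled, $|\alpha v^\alpha(x)|\le |\alpha v^\alpha(0)|+\alpha|v^\alpha(x)-v^\alpha(0)|$ is bounded on bounded sets; combined with the $z$-bound $|Z_t^{\alpha,x}|=|\nabla v^\alpha(X_t^x)G|$ obtained from the Bismut--Elworthy formula (Lemma-type argument from the cited Fuhrman--Tessitore paper), one gets $|\nabla v^\alpha(x)|\le C(1+|x|^{1+\mu})$ uniformly.

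Then I would pass to the limit: set $\lambda:=\lim_{\alpha\to0}\alpha v^\alpha(0)$ (along a subsequence, using boundedness) and $v(x):=\lim_{\alpha\to0}\big(v^\alpha(x)-v^\alpha(0)\big)$, the convergence holding locally uniformly thanks to the uniform equicontinuity modulus just established; one checks $v(0)=0$, the stated growth bounds pass to the limit, and a stability argument for BSDEs on each finite interval $[0,T]$ shows that $(Y^x,Z^x,\lambda):=(v(X^x),\nabla v(X^x)G,\lambda)$ solves \eqref{EBSDE}, with $v\in\Ga$ inherited from the $\Ga$-bounds on $v^\alpha$ and the derivative identity $Z_t^x=\nabla v(X_t^x)G$. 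The main obstacle is making the Girsanov-plus-coupling step genuinely uniform in $\alpha$ and in the horizon $T$ simultaneously: one must be careful that the polynomial growth power of $v^\alpha$ entering the coupling estimate \eqref{Basic Coupling Estimates} does not degrade with $\alpha$, which is exactly why the uniform growth bound and the uniform bound on the linearisation drift (hence a coupling constant $\hat c,\hat\eta$ depending on $F$ only through its sup norm, as emphasised in Lemma \ref{basic coupling estimates}) are indispensable. Everything else — existence for fixed $\alpha$, the $\Ga$ regularity, the finite-horizon stability — is routine and can be quoted from \cite{DEBUSSCHE_HU_TESSITORE_EBSDE_WEAK_DISSIPATIVE} and \cite{FUHRMAN_TESSITORE_THE_BISMUT_ELWORTHY_FORMULA}.
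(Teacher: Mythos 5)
Your overall strategy is the one the paper intends: its proof of Lemma \ref{lemma existence EBSDE} is a one-line deferral to \cite{DEBUSSCHE_HU_TESSITORE_EBSDE_WEAK_DISSIPATIVE} (``the only difference coming from the polynomial growth of $f(x,0)$''), and the discounted approximation $Y^{\alpha,x}=v^\alpha(X^x)$, the uniform bounds on $v^\alpha-v^\alpha(0)$ and on $\alpha v^\alpha(0)$, the Bismut--Elworthy gradient bound and the passage $\alpha\to0$ are exactly that argument. However, the mechanism you give for the key uniform estimate on $v^\alpha(x)-v^\alpha(x')$ would not work as written. Hypothesis \ref{hypo EBSDE} assumes $f$ only \emph{continuous} in $x$ with polynomial growth, not Lipschitz in $x$; so if you write the BSDE for the difference $Y^{\alpha,x}-Y^{\alpha,x'}$, linearise in $z$ and apply Girsanov, you are left with the residual integrand $f(X_s^{x},Z_s^{\alpha,x'})-f(X_s^{x'},Z_s^{\alpha,x'})$, which is not of the form $\phi(X_s^{x})-\phi(X_s^{x'})$ for a fixed $\phi$ and hence cannot be fed into the coupling estimate. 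Moreover, applying the coupling estimate to the terminal term $v^\alpha(X_T^{x})-v^\alpha(X_T^{x'})$ presupposes a polynomial bound on $v^\alpha$ that is uniform in $\alpha$ --- which is precisely what you are trying to establish.

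The step that actually works (and the reason the paper sets up Corollary \ref{basic coupling estimates extended} and the lemma on Lipschitz approximation of the Girsanov drift) is to transform each $v^\alpha$ \emph{separately}: writing $f(X_s^x,Z_s^{\alpha,x})=f(X_s^x,0)+Z_s^{\alpha,x}\gamma^\alpha(X_s^x)$ with $|\gamma^\alpha|\le l$, one gets the infinite-horizon representation
\begin{align*}
v^\alpha(x)=\int_0^\infty e^{-\alpha s}\,\widetilde{\mathscr{P}}_s^{\alpha}[f(\cdot,0)](x)\,\der s,
\end{align*}
where $\widetilde{\mathscr{P}}^{\alpha}$ is the Kolmogorov semigroup of the SDE whose drift is perturbed by $G\gamma^\alpha$. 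The coupling estimate (\ref{Basic Coupling Estimates}) is then applied to the \emph{fixed} function $f(\cdot,0)$, of growth $\mu$ --- this is exactly where the exponent $1+\mu$ in the statement comes from --- and the constants $\hat c,\hat\eta$ are uniform in $\alpha$ because they depend on the perturbation only through its sup norm, bounded by $|G|_{L(\Xi,H)}\,l$. Since $\gamma^\alpha$ is merely bounded measurable, one needs the Lipschitz-approximation argument to invoke the coupling estimate. With this correction, the remainder of your outline (boundedness of $\alpha v^\alpha(0)$, the gradient bound, the diagonal extraction and the finite-horizon stability as $\alpha\to0$) goes through as you describe.
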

\begin{proof}
This can be proved in the same way as in \cite{DEBUSSCHE_HU_TESSITORE_EBSDE_WEAK_DISSIPATIVE}, the only difference coming from the polynomial growth of $f(x,0)$. 
\end{proof}

\begin{rema}
We stress the fact that the method used for the construction of a solution to the EBSDE requires the generator $f$ to have the following invariance property: $\forall (x, y, z) \in H\times \R \times \Xi^*, \forall c \in \R,   f(x,y+c,z) = f(x,y,z)$ as well as to have  $\forall x,y_1,y_2,z  \in H\times \R^2 \times \Xi^*, \langle f(x,y_1,z)-f(x,y_2,z) , y_1-y_2 \rangle \leq 0$. The first condition is equivalent to the fact that $f$ does not depend on $y$ which implies the second one.
\end{rema}

\begin{lemma}[Uniqueness]
The solution $(Y^x,Z^x,\lambda)$ of previous lemma is unique in the class of solutions $(Y,Z,\lambda)$ such that $Y = v(X^x)$, $|v(x)| \leq C(1+|x|^p)$ for some $p \geq 0$,  $v(0) = 0$, $Z \in L_{\mathscr{P},\text{loc}}^2(\Omega, {L}^2([0,\infty);\Xi^*))$, and $Z = \zeta(X^x)$ where $\zeta : H \rightarrow \Xi^*$ is continuous for the weak* topology.
\end{lemma}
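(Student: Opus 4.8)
The plan is to reproduce, for the EBSDE \eqref{EBSDE}, the Girsanov-plus-coupling uniqueness argument of \cite{DEBUSSCHE_HU_TESSITORE_EBSDE_WEAK_DISSIPATIVE}. Suppose $(Y^1,Z^1,\lambda_1)$ and $(Y^2,Z^2,\lambda_2)$ are two solutions in the announced class, so $Y^i_t = v_i(X^x_t)$, $Z^i_t = \zeta_i(X^x_t)$ with $v_i(0)=0$, $|v_i(x)|\le C(1+|x|^{p_i})$ and $\zeta_i$ weakly* continuous; put $q := p_1 \vee p_2$, $\bar v := v_1 - v_2$, $\bar Y := Y^1 - Y^2$, $\bar Z := Z^1 - Z^2$. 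First I apply Lemma~\ref{functions approch�e par suite uniforme de fonctions Lipchitz} with $\zeta = \zeta_1$, $\zeta' = \zeta_2$ (both time-independent) to produce the $\Xi$-valued coefficient $\Upsilon$, with $|\Upsilon|\le l$, $f(x,\zeta_1(x)) - f(x,\zeta_2(x)) = \langle \zeta_1(x)-\zeta_2(x),\Upsilon(x)\rangle$, and a uniformly bounded sequence of Lipschitz maps $\Upsilon_{m,n}$ with $\lim_n\lim_m \Upsilon_{m,n} = \Upsilon$ pointwise. Setting $\beta_s := \Upsilon(X^x_s)$, which is bounded, Girsanov's theorem yields, for each $T>0$, an equivalent probability $\Q_T$ on $\F_T$ under which $\widetilde W_t := W_t - \int_0^t \beta_s\der s$ is a cylindrical Wiener process on $[0,T]$ and, on $[0,T]$, $X^x$ is a martingale solution of \eqref{SDE} with the bounded Markovian drift $F + G\Upsilon$ (sup-norm $\le \sup_x|F(x)| + |G|_{L(\Xi,H)}\,l$, uniformly in $T$). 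I write $\widehat{\mathscr{P}}$ for the Kolmogorov semigroup of that perturbed equation, so that $\E^{\Q_T}[\phi(X^x_t)] = \widehat{\mathscr{P}}_t[\phi](x)$ for $t \le T$, and note that Lemma~\ref{lemma existence uniqueness SDE} applied under $\Q_T$ gives \eqref{estimate E sup |X_t|}--\eqref{estimate E sup |X_t| 2} for $X^x$ under $\Q_T$ with constants not depending on $T$.

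Next I subtract the two copies of \eqref{EBSDE}. Using $f(X^x_s,Z^1_s) - f(X^x_s,Z^2_s) = \langle \bar Z_s,\beta_s\rangle$ and $\bar Z_s\der W_s = \bar Z_s\der\widetilde W_s + \langle\bar Z_s,\beta_s\rangle\der s$, the $f$-terms cancel and $\bar Y$ solves the linear equation $\der\bar Y_t = (\lambda_1-\lambda_2)\der t + \bar Z_t\der\widetilde W_t$, i.e. $\bar Y_t = \bar v(x) + (\lambda_1-\lambda_2)t + N_t$ with $N_t := \int_0^t\bar Z_s\der\widetilde W_s$. Since $N_t = \bar v(X^x_t) - \bar v(x) - (\lambda_1-\lambda_2)t$ and $\E^{\Q_T}[\sup_{0\le t\le T}|X^x_t|^q] < \infty$ by \eqref{estimate E sup |X_t| 2}, the continuous local $\Q_T$-martingale $N$ is dominated on $[0,T]$ by an integrable random variable, hence is a true martingale; taking $\Q_T$-expectation at time $T$ gives
\begin{align*}
\bar v(x) = \widehat{\mathscr{P}}_T[\bar v](x) - (\lambda_1-\lambda_2)\,T .
\end{align*}
Evaluating at $x=0$ with $v_1(0)=v_2(0)=0$ forces $(\lambda_1-\lambda_2)T = \widehat{\mathscr{P}}_T[\bar v](0)$, and $|\widehat{\mathscr{P}}_T[\bar v](0)|\le C(1+\E^{\Q_T}|X^0_T|^q)\le C'$ uniformly in $T$ by \eqref{estimate E sup |X_t|}; letting $T\to\infty$ then yields $\lambda_1=\lambda_2$ and, in turn, $\widehat{\mathscr{P}}_T[\bar v](0)=0$ for all $T$.

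Finally, since $F + G\Upsilon$ is only bounded measurable, I invoke Corollary~\ref{basic coupling estimates extended} and Remark~\ref{remarque F m,n} along the drifts $F + G\Upsilon_{m,n}$: each associated semigroup obeys the estimate of Lemma~\ref{basic coupling estimates} with $\hat c,\hat\eta$ depending on the drift only through its sup-norm, hence uniform in $m,n$, and passing to the limit $\lim_n\lim_m$ shows $\widehat{\mathscr{P}}$ satisfies \eqref{Basic Coupling Estimates}. Hence, for every $x$ and $T$,
\begin{align*}
|v_1(x)-v_2(x)| = \bigl|\widehat{\mathscr{P}}_T[\bar v](x) - \widehat{\mathscr{P}}_T[\bar v](0)\bigr| \le \hat c\,(1+|x|^{1+q})\,e^{-\hat\eta T},
\end{align*}
so letting $T\to\infty$ gives $v_1=v_2$, i.e. $Y^1=Y^2$. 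Then $\bar Y\equiv 0$ forces $N\equiv 0$, so $\langle N\rangle\equiv 0$ and $\bar Z = 0$ $\der\Q_T\otimes\der t$-a.e., hence $\der\Pb\otimes\der t$-a.e. by equivalence of the measures; therefore $Z^1=Z^2$.

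The hard part will be the first step: after a change of probability whose density depends on $X$ through the merely measurable coefficient $\Upsilon$, one must still bring the dynamics within reach of the coupling estimate. This is precisely what the restriction of the uniqueness class to solutions with $Z = \zeta(X^x)$, $\zeta$ weakly* continuous, is for --- it is this that makes Lemma~\ref{functions approch�e par suite uniforme de fonctions Lipchitz} applicable and thus furnishes the uniformly bounded Lipschitz approximations of $\Upsilon$ needed to reach Corollary~\ref{basic coupling estimates extended}. The remaining technicalities --- the true-martingale property of $N$ under $\Q_T$ and the uniform-in-$T$ moment bounds --- are routine consequences of the bounded-drift estimates in Lemma~\ref{lemma existence uniqueness SDE}.
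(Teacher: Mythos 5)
Your proof is correct and follows essentially the same route as the paper's: linearize the difference of the two generators via the bounded coefficient $\Upsilon$, apply Girsanov, represent $v^1-v^2$ through the Kolmogorov semigroup of the perturbed SDE, invoke the Lipschitz-approximation lemma together with Remark \ref{remarque F m,n} to get the coupling estimate \eqref{Basic Coupling Estimates}, and conclude using $v^1(0)=v^2(0)=0$ and $T\to\infty$, then kill $Z^1-Z^2$ through the quadratic variation. The only departures are minor: the paper obtains $\lambda^1=\lambda^2$ by citing Theorem 3.10 of \cite{DEBUSSCHE_HU_TESSITORE_EBSDE_WEAK_DISSIPATIVE}, whereas you derive it directly from the same representation formula evaluated at $x=0$ and divided by $T$, and you make explicit the true-martingale property of the stochastic integral that the paper leaves implicit.
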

\begin{proof}
We give a simpler proof than that in \cite{DEBUSSCHE_HU_TESSITORE_EBSDE_WEAK_DISSIPATIVE}. Indeed, let us consider two solutions $(Y^1 = v^1(X^x), Z^1$ $= \zeta^1(X^x),\lambda^1)$ and $(Y^2 = v^2(X^x), Z^2= \zeta^2(X^x) ,\lambda^2)$. From Theorem 3.10 in \cite{DEBUSSCHE_HU_TESSITORE_EBSDE_WEAK_DISSIPATIVE}, we get $\lambda^1 = \lambda^2$. Then, we have
\begin{align*}
v^{1}(x) - v^2(x) &= v^1(X_T^x) - v^2(X_T^x) + \int_0^T (f(X_s^x,Z_s^1) - f(X_s^x,Z_s^2)) \der s - \int_t^T (Z_s^1-Z_s^2) \der W_s\\
&= v^1(X_T^x) - v^2(X_T^x)  + \int_0^T (Z_s^1-Z_s^2)\frac{(f(X_s^x,Z_s^1) - f(X_s^x,Z_s^2))(Z_s^1-Z_s^2)^*}{|Z_s^1-Z_s^2|^2} \der s \\
&~~~~- \int_0^T (Z_s^1-Z_s^2) \der W_s.
\end{align*}
Now we define
\begin{align*}
\beta(x) = \left\{
\begin{array}{ll}
\frac{(f(x,\zeta^1(x)) - f(x,\zeta^2(x)))(\zeta^1(x)-\zeta^2(x))^*}{|\zeta^1(x)-\zeta^2(x)|^2},&~~ \text{ if } \zeta^1(x) - \zeta^2(x) \neq 0 \\
 0, &~~\text{otherwise}.
\end{array}\right.
\end{align*}
As $\beta(X_s^x)$ is measurable and bounded, one can apply Girsanov's theorem to deduce the existence of a new probability $\Q^T$ under which $\widetilde{W}_t = W_t - \int_0^t \beta_s \der s$, $0 \leq s \leq T$ is a Wiener process. Then
\begin{align*}
v^1(x) - v^2(x) &= \E^{{\Q}^T} [v^1(X_T^x) - v^2(X_T^x)]\\
&= \mathscr{P}_T[v^1-v^2](x)
\end{align*}
where $\mathscr{P}_t$ is the Kolmogorov semigroup associated to the following SDE
\begin{align*}
\left\{ 
\begin{array}{l}
\der U_t^x = AU_t^x \der t + F(U_t^x) \der t + G\beta(U_t^x) \der t + G \der W_t , ~~~~t \geq 0, \\
U_0^x = x.
\end{array}
\right.
\end{align*}
Now, remark that $\beta$ satisfies hypotheses of Lemma \ref{functions approchée par suite uniforme de fonctions Lipchitz}, therefore by Remark \ref{remarque F m,n},
\begin{align*}
|(v^1 - v^2)(x) - ((v^1 - v^2)(0)) |&= \left| \mathscr{P}_T[v^1-v^2](x) - \mathscr{P}_T[v^1-v^2](0) \right|\\
&\leq C(1+|x|^{p+1})e^{- \hat{\eta} T}.
\end{align*}
Then, letting $T \rightarrow + \infty$ and noting that $(v^1 - v^2)(0) = 0$ leads us to 
\begin{align*}
v^1(x) = v^2(x), ~~~~\forall x \in H.
\end{align*} 
An Itô's formula applied to $|Y^1_t-Y^2_t|^2$ is enough to show that for all $T > 0$
\begin{align*}
\E \int_0^T |Z_s^1-Z_s^2|^2 \der s = 0,
\end{align*}
which concludes the proof of uniqueness.
\end{proof}



%

Similarly to the case of BSDE, we recall the link between solutions of such EBSDEs and ergodic HJB equations. We consider the following ergodic HJB equation for an unknown pair $(v(\cdot),\lambda)$,
\begin{align}\label{HJB ergodique}
\LL v(x) + f(x,\nabla v(x) G) - \lambda = 0, ~~~~\forall x \in H.
\end{align}
Since we are dealing with an elliptic equation it is natural to consider $(v,\lambda)$ as mild solution of equation (\ref{HJB ergodique}) if and only if, for arbitrary time $T > 0$, $v(x)$ coincides with the mild solution $u(t,x)$ of the corresponding parabolic equation having $v$ as a terminal condition:
\begin{align*}
\left\{
\begin{array}{l}
\frac{\partial u(t,x)}{\partial t } + \LL u(t,x) + f(x,\nabla u(t,x) G) - \lambda = 0,~~~~\forall t \in [0,T], ~~~~\forall x \in H,\\
u(T,x) = v(x), ~~~~\forall x \in H.
\end{array}
\right.
\end{align*}
Thus we are led to the following definition:
\begin{definition}
A pair $(v,\lambda)$, ($v : H \rightarrow \R$ and $\lambda \in \R$) is a mild solution of the HJB equation (\ref{HJB ergodique}) if the following are satisfied:
\begin{enumerate}
\item $v \in \mathscr{G}^1(H,\R)$;
\item there exists $C > 0$ such that $|\nabla v(x)| \leq C(1+|x|^C) $ for every $x \in H$;
\item for all $0 \leq t \leq T$ and $x \in H$,
\begin{align*}
v(x) = \mathscr{P}_{T-t}[v](x) + \int_t^T (\mathscr{P}_{s-t}[f(\cdot,\nabla v(\cdot) G)](x) - \lambda) \der s.
\end{align*}
\end{enumerate}
\end{definition}

We recall the following result.
\begin{lemma}
Assume that Hypotheses  \ref{hypo SDE non degenerate} and \ref{hypo EBSDE} hold true. Then equation (\ref{HJB ergodique}) admits a unique mild solution which is the pair $(v,\lambda)$ defined in Lemma \ref{lemma existence EBSDE}.
\end{lemma}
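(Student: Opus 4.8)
The plan is to prove the two assertions --- that the pair $(v,\lambda)$ of Lemma~\ref{lemma existence EBSDE} is a mild solution of (\ref{HJB ergodique}), and that it is the only one --- separately.

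For existence, conditions $1.$ and $2.$ of the definition are read off directly from Lemma~\ref{lemma existence EBSDE}: we know $v\in\Ga$ and $|\nabla v(x)|\leq C(1+|x|^{1+\mu})$. For condition $3.$, I would fix $0\leq t\leq T$, put $\tau=T-t$, write the EBSDE (\ref{EBSDE}) on the interval $[0,\tau]$ and substitute the Markovian representations $Y_s^x=v(X_s^x)$ and $Z_s^x=\nabla v(X_s^x)G$. Taking expectations then kills the stochastic integral, since $Z^x\in L^2_{\mathscr{P},\mathrm{loc}}$ makes it a true (square integrable) martingale, and after justifying a Fubini exchange from the polynomial growth of $v$, $\nabla v$ and $f(\cdot,0)$ together with the uniform moment bound (\ref{estimate E sup |X_t|}), I obtain $v(x)=\mathscr{P}_\tau[v](x)+\int_0^\tau\big(\mathscr{P}_s[f(\cdot,\nabla v(\cdot)G)](x)-\lambda\big)\,\der s$. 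The change of variable $s\mapsto s-t$ turns this into exactly condition $3.$, so $(v,\lambda)$ is a mild solution.

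For uniqueness, let $(v',\lambda')$ be any mild solution. I would first observe that $(v'-v'(0),\lambda')$ is again a mild solution --- because $\mathscr{P}_t$ fixes constants and $f$ only sees $\nabla v'$ --- so we may assume $v'(0)=0$; and that $v'\in\Ga$ together with the polynomial bound on $\nabla v'$ forces $v'$ itself to have polynomial growth (integrate along segments from the origin). The key step is to produce an EBSDE solution out of $(v',\lambda')$ and then invoke the uniqueness result for the EBSDE. Fix $T>0$: the generator $\tilde f(x,z):=f(x,z)-\lambda'$ and the terminal datum $g:=v'$ satisfy Hypothesis~\ref{hypo BSDE Markov 1}, so by the representation of mild solutions of the parabolic HJB equation the unique mild solution $u'$ on $[0,T]$ of the parabolic equation with generator $\tilde f$ and terminal condition $v'$ is given by $u'(t,x)=\widetilde Y_t^{T,t,x}$, where $(\widetilde Y^{T,\cdot,x},\widetilde Z^{T,\cdot,x})$ solves the corresponding BSDE; and by the definition of a mild solution of the ergodic equation, $u'(t,\cdot)=v'$ for every $t\in[0,T]$. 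Using the Markov property of $X$ and the $\mathscr{G}^{0,1}$-regularity of $u'$ (Bismut--Elworthy formula) I would identify $\widetilde Y_t^{T,0,x}=v'(X_t^x)$ and $\widetilde Z_t^{T,0,x}=\nabla v'(X_t^x)G$, the latter being independent of $T$. Hence $\big(v'(X^x),\,\nabla v'(X^x)G,\,\lambda'\big)$ is a solution of the EBSDE (\ref{EBSDE}), and it belongs to the admissible class of the uniqueness lemma: $v'$ has polynomial growth, $v'(0)=0$, the $Z$-component lies in $L^2_{\mathscr{P},\mathrm{loc}}$, and it has the form $\zeta(X^x)$ with $\zeta(x)=\nabla v'(x)G$ continuous for the weak$^*$ topology because $v'\in\Ga$. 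Comparing with the solution of Lemma~\ref{lemma existence EBSDE} then gives $\lambda'=\lambda$ and $v'(X_t^x)=v(X_t^x)$ for all $t$; taking $t=0$ yields $v'=v$ on $H$.

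I expect the main obstacle to be the bridge between the analytic notion of mild solution of the parabolic equation and its probabilistic counterpart --- concretely the identification $\widetilde Z_t^{T,0,x}=\nabla u'(t,X_t^x)G$, which relies on the Bismut--Elworthy type representation and on the $\mathscr{G}^{0,1}$-regularity of the parabolic mild solution. Everything else is bookkeeping with polynomial growth and moment estimates, or a direct appeal to the already established EBSDE uniqueness.
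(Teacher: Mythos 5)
Your argument is correct, but it is genuinely different from what the paper does: the paper's entire proof is the citation ``See Theorem $4.1$ in \cite{DEBUSSCHE_HU_TESSITORE_EBSDE_WEAK_DISSIPATIVE}'', whereas you reconstruct the result from the ingredients already present in the text. Your existence half (take expectations in the EBSDE under the Markovian representation $Y^x=v(X^x)$, $Z^x=\nabla v(X^x)G$, kill the martingale term, Fubini) is exactly the standard derivation and is fine. Your uniqueness half is the more interesting part: you turn an arbitrary mild solution $(v',\lambda')$ into an EBSDE solution by viewing the constant-in-time function $(t,x)\mapsto v'(x)$ as the (unique) mild solution of the parabolic equation with generator $f-\lambda'$ and terminal datum $v'$, and then using the probabilistic representation $\widetilde Y^{T,t,x}_t=u'(t,x)$, $\widetilde Z^{T,0,x}_t=\nabla_x u'(t,X^x_t)G$ to land in the admissibility class of the EBSDE uniqueness lemma. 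That is precisely the right bridge, and you correctly identify the two points that need care: the polynomial growth of $v'$ itself (not granted by the definition, but recovered by integrating the gradient bound along segments, legitimate for $\mathscr{G}^1$ functions) and the weak$^*$ continuity of $\zeta(x)=\nabla v'(x)G$ (which does follow from $v'\in\mathscr{G}^1$, since $x\mapsto\nabla v'(x)(G\xi)$ is continuous for each $\xi\in\Xi$). What your approach buys is a proof that stays entirely inside the paper's framework; what it costs is that it relies on the parabolic existence-and-uniqueness lemma with the modified data $(f-\lambda',v')$, so you should check explicitly that this pair satisfies Hypothesis \ref{hypo BSDE Markov 1} --- it does, once the polynomial growth of $v'$ is established.

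One caveat on the statement rather than on your proof: uniqueness of $v$ can only hold up to an additive constant, since condition $3.$ of the definition is invariant under $v\mapsto v+c$. Your normalization ``we may assume $v'(0)=0$'' is the correct fix, but the conclusion you should record is $v'=v+v'(0)$ and $\lambda'=\lambda$, i.e.\ uniqueness within the class normalized at the origin; this matches what the cited Theorem $4.1$ of \cite{DEBUSSCHE_HU_TESSITORE_EBSDE_WEAK_DISSIPATIVE} actually provides.
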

\begin{proof}
See Theorem $4.1$ in \cite{DEBUSSCHE_HU_TESSITORE_EBSDE_WEAK_DISSIPATIVE}.
\end{proof}

\section{Large time behaviour}
We recall that $(Y_s^{T,x},Z_s^{T,x})_{s \geq 0}$ denotes the solution of the finite horizon BSDE (\ref{BSDE}) with $t = 0$ and that $(Y^x_s,Z_s^x,\lambda)$ denotes the solution of the EBSDE (\ref{EBSDE}).
\subsection{First behaviour: path dependent framework and Markovian framework}
\begin{theorem}\label{theorem first estimate on behavior}
Assume that Hypotheses \ref{hypo SDE non degenerate} and \ref{hypo BSDE 1} hold true (path dependent case). Then, $\forall T > 0$, $\forall n \in \mathbb{N}^*$:
\begin{align}\label{first estimate behavior path dependent}
\left| \frac{Y_0^{T,x}}{T} - \lambda \right| \leq \frac{C_n(1+T^{1/n})(1+ |x|^{1+\mu})}{T}.
\end{align}
In particular, 
\begin{align*}
\frac{Y_0^{T,x}}{T} \underset{T \rightarrow + \infty}{ \longrightarrow} \lambda,
\end{align*}
uniformly in any bounded set of $H$.

Assume that Hypotheses \ref{hypo SDE non degenerate} and \ref{hypo BSDE Markov 1} hold true (Markovian case). Then, $\forall T > 0$:
\begin{align}\label{first estimate behavior markovien}
\left| \frac{Y_0^{T,x}}{T} - \lambda \right| \leq \frac{C(1+ |x|^{1+\mu})}{T},
\end{align}
i.e. 
\begin{align}
\left| \frac{u(T,x)}{T} - \lambda \right| \leq \frac{C(1+ |x|^{1+\mu})}{T},
\end{align}
where $u$ is the mild solution of (\ref{HJB cauchy changement de temps}).
In particular, 
\begin{align*}
\frac{u(T,x)}{T} = \frac{Y_0^{T,x}}{T} \underset{T \rightarrow + \infty}{ \longrightarrow} \lambda,
\end{align*}
uniformly in any bounded set of $H$.
\end{theorem}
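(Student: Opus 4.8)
The plan is to compare the finite-horizon BSDE solution $Y^{T,x}$ with the EBSDE solution $Y^x$ via the standard linearization trick, since $f$ is Lipschitz in its second argument. Writing $\widetilde Y_s = Y_s^{T,x} - Y_s^x$ and $\widetilde Z_s = Z_s^{T,x} - Z_s^x$, the pair $(\widetilde Y, \widetilde Z)$ solves, on $[0,T]$,
\begin{align*}
\widetilde Y_s = \big(\xi^T - Y_T^x\big) + \int_s^T \big( f(X_r^x,Z_r^{T,x}) - f(X_r^x,Z_r^x) + \lambda \big) \der r - \int_s^T \widetilde Z_r \der W_r.
\end{align*}
First I would introduce the bounded, $\F_r$-measurable process
\begin{align*}
\beta_r = \frac{f(X_r^x,Z_r^{T,x}) - f(X_r^x,Z_r^x)}{|\widetilde Z_r|^2}\,\widetilde Z_r^{*} \quad\text{(and } 0 \text{ when } \widetilde Z_r = 0\text{)},
\end{align*}
so that $f(X_r^x,Z_r^{T,x}) - f(X_r^x,Z_r^x) = \widetilde Z_r \beta_r$, with $|\beta_r| \leq l$. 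By Girsanov's theorem there is an equivalent probability $\Q^T$ under which $\widehat W_r = W_r - \int_0^r \beta_s \der s$ is a Wiener process on $[0,T]$, and taking conditional expectation at time $0$ kills the stochastic integral:
\begin{align*}
Y_0^{T,x} - Y_0^x = \E^{\Q^T}\!\big[\, \xi^T - Y_T^x \,\big] + \lambda T.
\end{align*}

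The bulk of the work is then to control $\E^{\Q^T}[\xi^T - Y_T^x]$ and show it is $o(T)$, indeed $O(1+T^{1/n})(1+|x|^{1+\mu})$ in the path-dependent case and $O(1+|x|^{1+\mu})$ in the Markovian case. Under $\Q^T$ the process $X^x$ solves the same mild SDE but with drift $F(\cdot)$ shifted by $G\beta_r$, which is still bounded by a constant depending only on $|G|_{L(\Xi,H)}$ and $l$ (not on $T$); hence the moment bounds \eqref{estimate E sup |X_t|} and \eqref{estimate E sup |X_t| 2} of Lemma~\ref{lemma existence uniqueness SDE} hold under $\Q^T$ with the same constants. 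Using $|\xi^T| \leq C(1+\sup_{0\le s\le T}|X_s^x|^\mu)$, the polynomial bound $|Y_T^x| = |v(X_T^x)| \leq C(1+|X_T^x|^{1+\mu})$ from Lemma~\ref{lemma existence EBSDE}, and \eqref{estimate E sup |X_t| 2}, I would get $\E^{\Q^T}[|\xi^T - Y_T^x|] \leq C(1+T)(1+|x|^{1+\mu})$ — which is only $O(T)$, not good enough on its own. The trick to recover the extra gain is to interpolate: for any $q \in (1,\infty)$ we have, by \eqref{estimate E sup |X_t|}, the $T$-uniform bound $\sup_{0\le t\le T}\E^{\Q^T}|X_t^x|^{q(1+\mu)} \leq C(1+|x|)^{q(1+\mu)}$, while \eqref{estimate E sup |X_t| 2} controls the supremum with a factor $(1+T)$; combining these via Hölder (splitting $\E[\sup|X|^{1+\mu}]$ using a small power against the sup-bound and a large power against the $T$-uniform bound) yields $\E^{\Q^T}[\sup_{0\le s\le T}|X_s^x|^{1+\mu}] \leq C_n (1+T^{1/n})(1+|x|^{1+\mu})$ for every $n \geq 1$. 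Dividing by $T$ gives \eqref{first estimate behavior path dependent}.

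In the Markovian case $\xi^T = g(X_T^x)$ involves only the terminal value, not the running supremum, so I can drop the factor $(1+T)$ entirely: \eqref{estimate E sup |X_t|} alone gives $\E^{\Q^T}[|g(X_T^x)| + |v(X_T^x)|] \leq C(1+|x|^{1+\mu})$ with a constant uniform in $T$, and dividing the identity $Y_0^{T,x} - \lambda T = Y_0^x + \E^{\Q^T}[g(X_T^x) - v(X_T^x)]$ by $T$ — using also $|Y_0^x| = |v(x)| \leq C(1+|x|^{1+\mu})$ — yields \eqref{first estimate behavior markovien}. The identification with $u(T,x)$ follows from the remark after Lemma~3.9 that $u(T,x) = Y_0^{T,x}$, and uniform convergence on bounded sets of $H$ is immediate from the explicit $|x|$-dependence of the bounds. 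The main obstacle is the interpolation step producing the $T^{1/n}$ rate in the path-dependent case: one must be careful that the Girsanov-shifted drift bound is genuinely $T$-independent so that the $T$-uniform moment estimate \eqref{estimate E sup |X_t|} survives the change of measure, and that the Hölder split is arranged so the large power lands on the $T$-uniform estimate and only an arbitrarily small power on the $(1+T)$-estimate.
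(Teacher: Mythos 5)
Your architecture is exactly the paper's: linearize $f$ in $z$, apply Girsanov with the bounded kernel $\beta$, obtain the representation $Y_0^{T,x}-Y_0^x-\lambda T=\E^{\Q^T}[\xi^T-v(X_T^x)]$, note that under $\Q^T$ the forward process solves the same mild SDE with the drift shifted by the bounded term $G\beta$, and then invoke the moment estimates of Lemma \ref{lemma existence uniqueness SDE} (which indeed depend on the drift only through its sup norm, hence are uniform in $T$). The Markovian case is handled identically to the paper.

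The one place where your mechanism would fail as written is the step producing the $T^{1/n}$ gain in the path-dependent case. You propose a H\"older split of $\E^{\Q^T}[\sup_{s\le T}|X_s^x|^{1+\mu}]$ with ``a large power against the $T$-uniform bound'' \eqref{estimate E sup |X_t|}. But \eqref{estimate E sup |X_t|} controls $\sup_{t}\E|X_t^x|^p$, not $\E\sup_t|X_t^x|^p$, and any H\"older factorization of $\E[\sup_s|X_s|^{a}\cdot\sup_s|X_s|^{b}]$ leaves the running supremum inside \emph{both} factors, so the pointwise-in-$t$ estimate can never be brought to bear on either of them; each factor then picks up $(1+T)^{1/p'}$ and $(1+T)^{1/q'}$ from \eqref{estimate E sup |X_t| 2} and the product is again $O(T)$. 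The correct (and simpler) mechanism, which is what the paper does, uses only \eqref{estimate E sup |X_t| 2} together with Jensen:
\begin{align*}
\E^{\Q^T}\Bigl[\sup_{0\le s\le T}|X_s^x|^{\mu}\Bigr]\le\Bigl(\E^{\Q^T}\Bigl[\sup_{0\le s\le T}|X_s^x|^{n\mu}\Bigr]\Bigr)^{1/n}\le\bigl(C_n(1+T)(1+|x|^{n\mu})\bigr)^{1/n}\le C_n(1+T^{1/n})(1+|x|^{\mu}),
\end{align*}
the point being that the prefactor $(1+T)$ in \eqref{estimate E sup |X_t| 2} is linear in $T$ for \emph{every} moment order $p$, so raising the estimate to the power $1/n$ outside transfers the whole gain onto $T$. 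The terminal term $\E^{\Q^T}|X_T^x|^{1+\mu}$ is at the fixed time $T$ and is the one place where the $T$-uniform bound \eqref{estimate E sup |X_t|} legitimately applies, exactly as you use it in the Markovian case. With this repair your argument coincides with the paper's proof.
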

\begin{proof}
First we treat the path dependent case, that is, when Hypotheses \ref{hypo SDE non degenerate} and \ref{hypo BSDE 1} hold true.
For all $x \in H$, $T > 0$:
\begin{align*}
\left| \frac{Y_0^{T,x}}{T} - \lambda \right| &\leq \left| \frac{Y_0^{T,x} - Y_0^x - \lambda T}{T} \right| + \left| \frac{Y_0^{x}}{T} \right|.
\end{align*}
We have:
\begin{align*}
Y_0^{T,x} - Y_0^x - \lambda T &= \xi^T - v(X_T^x) + \int_0^T (f(X_s^x,Z_s^{T,x}) - f(X_s^x,Z_s^{x})) \der s - \int_0^T (Z_s^{T,x} - Z_s^{x}) \der W_s\\
&=  \xi^T - v(X_T^x) + \int_0^T (Z_s^{T,x} - Z_s) \beta_s^T \der s - \int_0^T (Z_s^{T,x} - Z_s) \der W_s,
\end{align*}
where for all $s \in [0,T]$
\begin{align*}
\beta^T_s = \left\{
\begin{array}{ll}
 \frac{ (f(X_s^x,Z_s^{T,x}) - f(X_s^x,Z_s^{x}))(Z_s^{T,x} - Z_s^x)^*}{|Z_s^{T,x} - Z_s^x|^2},&~~ \text{ if } Z_s^{T,x} - Z_s^x \neq 0 \\
 0, &~~\text{otherwise}.
\end{array}\right.
\end{align*}

The process $\beta_s^T$ is progressively measurable and bounded by $l$ therefore we can apply Girsanov's Theorem to obtain that there exists a probability measure $\Q^T$ under which $\widetilde{W}_t^T = -\int_0^t \beta_s^T \der s + W_t$, $0 \leq t \leq T$ is a Wiener process. We recall that if we define 
\begin{align*}
M_T = \exp \left( \int_0^T \beta_s^T \der W_s - \frac{1}{2} \int_0^T |\beta_s^T|_{\Xi}^2\der s  \right),
\end{align*}
the following formula holds: $\der \Q^T = M_T \der \Pb$.

Taking the expectation with respect to $\Q^T$ we get
\begin{align}\label{representation}
Y_0^{T,x} - Y_0^x - \lambda T = \E^{\Q^T} (\xi^T - v(X_T^x)).
\end{align}
Hence we have
\begin{align*}
\left| \frac{Y_0^{T,x} - Y_0^x - \lambda T}{T} \right| &\leq C\frac{1+\E^{\Q^T}[\sup_{0 \leq t \leq T}|X_t^x|^\mu]}{T} + C\frac{1+\E^{\Q^T} \left(|X_T^x|^{1+\mu}\right)}{T}.
\end{align*}

The process $(X_t^x)_{t \geq 0}$ is the mild solution of 
\begin{align*}
\left\{ 
\begin{array}{l}
\der X_t^x = AX_t^x \der t + F(X_t^x) \der t + G\beta_t^T \der t + G \der \widetilde{W}_t^T , ~~~~t \in [0,T], \\
X_0^x = x.
\end{array}
\right.
\end{align*}
Thus, by Jensen's inequality and the estimate \eqref{estimate E sup |X_t| 2}, there exists a constant $C_n$ which does not depend on time such that 
\begin{align*}
\E^{\Q^T}[\sup_{0 \leq t \leq T}|X_t^x|^\mu] \leq (\E^{\Q^T}[\sup_{0 \leq t \leq T}|X_t^x|^{n\mu}])^{1/n} \leq C_n(1+T^{1/n})(1+|x|^\mu),
\end{align*}
and by Lemma \ref{lemma existence uniqueness SDE},
\begin{align*}
\E^{\Q^T}(|X_T^x|^{1+\mu}) \leq C(1+|x|^{1+\mu}),
\end{align*}
which allows us to obtain
\begin{align*}
\left| \frac{Y_0^{T,x} - Y_0^x - \lambda T}{T} \right| 
\leq \frac{C_n(1+T^{1/n})(1+|x|^{1+\mu})}{T}.
\end{align*}
Finally we note that 
\begin{align*}
\left| \frac{Y_0^x}{T} \right| \leq \frac{C(1+|x|^{1+\mu})}{T},
\end{align*}
which gives the result for the path dependent case.

Now we treat the Markovian case: by equality (\ref{representation}), we obtain
\begin{align}\label{representation 2}
Y_0^{T,x} - Y_0^x - \lambda T = \E^{\Q^T} (g(X_T^x) - v(X_T^x)).
\end{align}
Therefore, since $|g(x)-v(x)| \leq C(1+|x|^{1+\mu})$,
\begin{align*}
\left| \frac{Y_0^{T,x} - Y_0^x - \lambda T}{T} \right| &\leq  C\frac{1+\E^{\Q^T} \left(|X_T^x|^{1+\mu}\right)}{T} \leq C\frac{1+|x|^{1+\mu}}{T},
\end{align*}
which gives the result.
\end{proof}

\begin{rema}
If $G$ is possibly degenerate, Theorem $4.1$ remains true under additional assumptions that $f$ is locally Lipschitz in $x$ (i.e. $\exists \mu \geq 0$, $\forall x,x' \in H$, $\forall z \in \Xi^*$, $|f(x,z)-f(x',z)| \leq C(1+|x|^{\mu}+|x'|^{\mu})|x-x'|$) and that $A+F$ is dissipative. In this case, we have existence of solution to the EBSDE and $\lambda$ is unique from  \cite{FUHRMAN_HU_TESSITORE_EBSDE_BANACH_SPACE}.
\end{rema}

\subsection{Second behaviour and third behaviour: Markovian framework}
In this section, we introduce a new set of hypothesis without loss of generality. Note that it is the same as Hypothesis \ref{hypo BSDE Markov 1} but with $F \equiv 0$. However we write it again for reader's convenience.
\begin{hypo}[Markovian case, $F \equiv 0$]\label{hypo BSDE Markov 2} There exist $l > 0$, $\mu \geq 0$ such that the function $f : H \times \Xi^* \rightarrow \R$ and $\xi^T$ satisfy:
\begin{enumerate}
\item $F \equiv 0$,
\item $\xi^T = g(X_T^x)$, where $g : H \rightarrow \R$ is continuous and have polynomial growth: for all $x \in H$, $|g(x)| \leq C(1+|x|^\mu)$,
\item $\forall x \in H$, $\forall z, z' \in \Xi^*$, $|f(x,z) - f(x,z')| \leq l |z-z'|$,
\item $f(\cdot,z)$ is continuous and of polynomial growth, i.e. $\forall x \in H$, $|f(x,0)| \leq C(1+|x|^\mu)$.
\end{enumerate}
\end{hypo}

\begin{rema}
Note that setting $F \equiv 0$ is not restrictive. Indeed let us recall that the purpose of this paper is to study the large time behaviour of the mild solution of
\begin{align*}
\left\{ 
\begin{array}{ll}
\frac{\partial u(t,x)}{\partial t} = \LL u(t,x) + f(x,\nabla u(t,x)G) , & \forall (t,x) \in \R_+ \times H,\\
u(0,x) = g(x),&\forall x \in H.
\end{array}
\right.
\end{align*}
Now remark that
\begin{align*}
\langle Ax + F(x) , \nabla u(t,x) \rangle + f(x,\nabla u(t,x) G) = \langle Ax , \nabla u(t,x) \rangle + \widetilde{f}(x, \nabla u(t,x) G),
\end{align*}
where $\widetilde{f}(x,z) = f(x,z) + \langle F(x) , z G^{-1} \rangle$ is a continuous function in $x$ with polynomial growth in $x$ and Lipschitz in $z$.
Therefore, under our assumptions, we can always consider the case $F \equiv 0$ by replacing $f$ by $\widetilde{f}$ if necessary.
\end{rema}

\begin{theorem}\label{theorem second estimate on behavior}
Assume that Hypotheses \ref{hypo SDE non degenerate} and \ref{hypo BSDE Markov 2} hold true. Then there exists $L \in \R$ such that,
\begin{align*}
\forall x \in H, ~~~Y_0^{T,x} - \lambda T - Y_0^x  \underset{T \rightarrow +\infty}{\longrightarrow} L,
\end{align*}
i.e. 
\begin{align*}
\forall x \in H, ~~~u(T,x) - \lambda T - v(x) \underset{T \rightarrow +\infty}{\longrightarrow} L,
\end{align*}
where $u$ is the mild solution of (\ref{HJB cauchy changement de temps}).

Furthermore the following rate of convergence holds 
\begin{align*}
|Y_0^{T,x} - \lambda T - Y_0^x - L| \leq C(1 + |x|^{2+\mu} )e^{-\hat{\eta} T},
\end{align*}
i.e.
\begin{align*}
|u(T,x)- \lambda T - v(x) - L| \leq C(1 + |x|^{2+\mu} )e^{-\hat{\eta} T},
\end{align*}
where $u$ is the mild solution of (\ref{HJB cauchy changement de temps}).
\end{theorem}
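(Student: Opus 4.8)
The plan is to leverage the representation formula \eqref{representation 2} from the proof of Theorem \ref{theorem first estimate on behavior}, which already gives
\[
Y_0^{T,x} - \lambda T - Y_0^x = \E^{\Q^T}\!\big(g(X_T^x) - v(X_T^x)\big),
\]
where $\Q^T$ is the Girsanov-shifted measure under which $\widetilde W^T_t = W_t - \int_0^t \beta^T_s\,\der s$ is a Wiener process, and (since $F\equiv 0$) the process $X^x$ solves, under $\Q^T$, a perturbed Ornstein-Uhlenbeck equation $\der X^x_t = AX^x_t\,\der t + G\beta^T_t\,\der t + G\,\der\widetilde W^T_t$ with $\beta^T$ progressively measurable and bounded by $l$. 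The key observation is that the right-hand side equals $\mathscr{P}^{\beta^T}_T[g-v](x)$, the time-$T$ Kolmogorov semigroup (associated to that perturbed SDE) applied to the function $\phi := g - v$, which by Lemma \ref{lemma existence EBSDE} has polynomial growth of order $1+\mu$. First I would establish that $\mathscr{P}^{\beta^T}_T[\phi](x)$ is a Cauchy family as $T\to\infty$, which will both produce the limit $L$ and, with quantitative control, the exponential rate.

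Next I would exploit the flow/Markov structure: for $T' > T$, by conditioning at time $T$ and using that $\beta^{T'}$ and $\beta^T$ need not coincide but both are bounded by $l$, I would write the difference $\mathscr{P}^{\beta^{T'}}_{T'}[\phi](x) - \mathscr{P}^{\beta^T}_{T}[\phi](x)$ in terms of an integral against increments of the processes, and then apply the Basic Coupling Estimates of Lemma \ref{basic coupling estimates} (extended via Corollary \ref{basic coupling estimates extended} and Lemma \ref{functions approch�e par suite uniforme de fonctions Lipchitz} to the merely bounded measurable drift $\beta^T$). The point is that any two solutions of the perturbed OU equation started at different points, or driven by different bounded drifts, couple at rate $e^{-\hat\eta t}$ up to a polynomial prefactor in the initial data; applied to the function $\phi$ of growth $1+\mu$, one picks up a prefactor of order $|x|^{2+\mu}$ (the extra power coming from the coupling bound \eqref{Basic Coupling Estimates} which raises the growth by one). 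Summing a geometric series in the horizon then yields $|\mathscr{P}^{\beta^{T'}}_{T'}[\phi](x) - \mathscr{P}^{\beta^T}_{T}[\phi](x)| \le C(1+|x|^{2+\mu})e^{-\hat\eta T}$, so the family is Cauchy and converges to some $L=L(x)$; a further application of the coupling estimate with $x$ versus $0$, together with $\phi(0) = g(0) - v(0)$ being a fixed constant, shows that $L$ is in fact independent of $x$. Passing to the limit $T'\to\infty$ in the Cauchy estimate gives the stated rate $|Y_0^{T,x}-\lambda T - Y_0^x - L|\le C(1+|x|^{2+\mu})e^{-\hat\eta T}$.

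The delicate point — and the main obstacle — is making rigorous the comparison between two Girsanov measures $\Q^{T}$ and $\Q^{T'}$ corresponding to the two distinct processes $\beta^T$ and $\beta^{T'}$, since these are defined through the solutions of two different BSDEs and there is no reason for $\beta^{T'}_s$ to agree with $\beta^T_s$ on $[0,T]$. The way around this is to avoid comparing the measures directly: instead one uses the BSDE structure once more at the intermediate time $T$, writing $Y^{T',x}_0 - \lambda T' - Y^x_0 = (Y^{T',x}_0 - \lambda T' - Y^x_0) $ and inserting $Y^{T',x}_T$, so that the difference reduces to $\E^{\Q}\big[(Y^{T',x}_T - \lambda(T'-T) - Y^x_T)\big]$ under a single measure, then recognizing the bracket as (a conditional version of) the same quantity with horizon $T'-T$ and random initial condition $X^x_T$, to which the Markovian coupling estimate applies with the polynomial bound controlled via \eqref{estimate E sup |X_t|}. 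The routine parts — verifying the polynomial growth bookkeeping, checking the hypotheses of Lemma \ref{functions approch�e par suite uniforme de fonctions Lipchitz} for $\beta^T$ exactly as in the uniqueness proof for the EBSDE, and bounding $\E^{\Q^T}|X^x_T|^{2+\mu} \le C(1+|x|^{2+\mu})$ uniformly in $T$ — I would carry out as in the preceding lemmas and omit here.
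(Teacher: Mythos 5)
Your proposal is correct, and it reaches the conclusion through the same three core ingredients as the paper — the Girsanov representation $w_T(0,x):=Y_0^{T,x}-\lambda T-Y_0^x=\mathscr{P}_T[g-v](x)$ from \eqref{representation 2}, the coupling estimate of Lemma \ref{basic coupling estimates} (extended to the bounded measurable drift $\beta^T(t,x)$ built from $\nabla u_T$ and $\nabla v$), and the flow identity $w_{T+S}(0,x)=\mathscr{P}_S[w_T(0,\cdot)](x)$ obtained by inserting the intermediate time, which is exactly the paper's \eqref{w_(T+S)(0,x)=E(w_T(0,U_S^x))}; your resolution of the ``two different Girsanov measures'' issue is precisely how the paper handles it. The organization, however, is genuinely different. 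The paper first proves three estimates on $w_T$ (a uniform bound, a gradient bound via the Bismut--Elworthy formula and a singular Gronwall lemma, and the oscillation bound $|w_T(0,x)-w_T(0,y)|\le C(1+|x|^{2+\mu}+|y|^{2+\mu})e^{-\hat\eta T}$), then extracts a convergent subsequence by a diagonal procedure on a countable dense set, uses the gradient bound for equicontinuity and relative compactness in $\mathscr{C}(K,\R)$, proves uniqueness of the accumulation point via the flow identity, and only at the very end derives the rate. You instead go straight for the Cauchy property: combining the flow identity with the oscillation bound and the uniform moment estimate \eqref{estimate E sup |X_t|} gives $|w_{T+S}(0,x)-w_T(0,x)|\le C(1+|x|^{2+\mu})e^{-\hat\eta T}$ uniformly in $S$, which simultaneously yields existence of the limit and the rate, and the oscillation bound then forces the limit to be independent of $x$; no geometric-series summation is even needed since the single-step bound is uniform in $S$. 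This buys a real simplification — the entire Bismut--Elworthy gradient estimate and the compactness/diagonal machinery of the paper's proof become superfluous — at no loss of generality. The only point to make fully explicit is that $\beta^T$ must be written as a deterministic function of $(t,x)$ (via $\nabla u_T(t,x)G$ and $\nabla v(x)G$) so that the semigroup interpretation and the coupling estimate between two starting points apply; you flag this by referring to the EBSDE uniqueness argument, which is the right model.
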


\begin{proof}
Let us start by defining
\begin{align*}
&u_T(t,x) := Y_t^{T,t,x}\\
&w_T(t,x) := u_T(t,x) - \lambda (T-t) - v(x).
\end{align*}
We recall that $Y_s^{T,t,x} = u_T(s,X_s^{t,x})$ and that $Y_s^x = v(X_s^x)$, where $v$ is defined in Lemma \ref{EBSDE}. We recall that for all $T,S \geq 0$, $u_T$ is the unique mild solution of 
\begin{align*}
\left\{ 
\begin{array}{ll}
\frac{\partial u_T(t,x)}{\partial t} + \LL u_T(t,x) + f(x,\nabla u_T(t,x)G) = 0, & \forall (t,x) \in [0,T] \times H,\\
u_T(T,x) = g(x),&\forall x \in H,
\end{array}
\right.
\end{align*}
and that 
 $u_{T+S}$ is the unique mild solution of 
\begin{align*}
\left\{ 
\begin{array}{ll}
\frac{\partial u_{T+S}(t,x)}{\partial t} + \LL u_{T+S}(t,x) + f(x,\nabla u_{T+S}(t,x)G) = 0, & \forall (t,x) \in [0,T+S] \times H,\\
u_{T+S}(T+S,x) = g(x),&\forall x \in H.
\end{array}
\right.
\end{align*}
This implies that $u_T(0,x)=u_{T+S}(S,x)$, for all $x \in H$, and then,
\begin{align}\label{changement de temps w}
w_T(0,x) = w_{T+S}(S,x).
\end{align} 

We will need some estimates on $w_T$ given in the following lemma.
\begin{lemma}\label{lemma estimates on w_T(x)}
Under the hypotheses of Theorem \ref{theorem second estimate on behavior}, $\exists C>0$, $\forall x,y \in H$, $\forall T > 0$, $\forall 0 < T' \leq T$, $\exists C_{T'}>0$,
\begin{align*}
&|w_T(0,x)| \leq C(1+|x|^{1+\mu}),\\
&|\nabla_x w_T(0,x)| \leq {\frac{C_{T'}}{\sqrt{T'}}}(1+|x|^{1+\mu}),\\
&|w_T(0,x) - w_T(0,y)| \leq  C(1+|x|^{2+\mu}+|y|^{2+\mu})e^{-\hat{\eta}T}.
\end{align*}
We stress the fact that $C$ depends only on $\eta$, $M$, $\gamma$, $G$, $\mu$, $\sup_{x \in H} \frac{|g(x)|}{1+|x|^\mu}$, $\sup_{x \in H}\frac{|f(x)|}{1+|x|^\mu}$ and $l$ and $C_T'$ depends on same parameters as $C$ and $T'$.
\end{lemma}
\begin{proof}[Proof of Lemma \ref{lemma estimates on w_T(x)} ]
The first inequality of the lemma is a direct application of estimate in Theorem \ref{theorem first estimate on behavior}. Indeed, $\forall x \in H, \forall T >0$,
\begin{align}\label{estimee w_T(0,x) dans la preuve}
|w_T(0,x)| &= |u_T(0,x)-\lambda T - v(x)| \nonumber \\
& = |Y_0^{T,x} - Y_0^x- \lambda T |\nonumber \\
& \leq C(1+|x|^{1+\mu}).
\end{align}

Now, let us establish the gradient estimate. The process $(w_T(s,X_s^{t,x}))_{t \leq s \leq T}$ satisfies the following equation, for all $t \leq s \leq T$,
\begin{align*}
w_T(s,X_s^{t,x}) =  w_T(T,X_T^{t,x}) &+ \int_s^T ({f}(X_r^{t,x},Z_r^{T,t,x}) - {f}(X_r^{t,x},Z_r^{t,x})) \der r \\
& ~~~~~~~~~~~~~~~~~~~~- \int_s^T (Z_r^{T,t,x} - Z_r^{t,x}) \der W_r.
\end{align*}

Now remark that for all $t \leq T$ and $t \leq s \leq   T' \leq T$ we have
\begin{align*}
w_T(s,X_s^{t,x})  &= w_T(T',X_{T'}^{t,x}) + \int_s^{T'} ({f}(X_r^{t,x},Z_r^{T,t,x}) - {f}(X_r^{t,x},Z_r^{t,x})) \der r \\
&~~~~~~~~~~~~~~~~~~~~~- \int_s^{T'} (Z_r^{T,t,x} - Z_r^{t,x}) \der {W}_r \\
& = w_{T-T'}(0,X_{T'}^{t,x}) +  \int_s^{T'} ({f}(X_r^{t,x},Z_r^{T,t,x}-Z_r^{t,x}+Z_r^{t,x}) - {f}(X_r^{t,x},Z_r^{t,x})) \der r \\
& ~~~~~~~~~~~~~~~~~~~~~- \int_t^{T'} (Z_r^{T,t,x} - Z_r^{t,x}) \der {W}_r,
\end{align*}
where we have used equality (\ref{changement de temps w}) for the second equality.

We also recall that (see \cite{FUHRMAN_TESSITORE_THE_BISMUT_ELWORTHY_FORMULA} Theorem $4.2$ and \cite{DEBUSSCHE_HU_TESSITORE_EBSDE_WEAK_DISSIPATIVE} Theorem $3.8$), $\forall x  \in H, \forall s \in [t,T[$,
\begin{equation*}
Z_s^{T,t,x} = \nabla_x u_T(s,X_s^{t,x}) G, \quad \textrm{and} \quad Z_s^{t,x} = \nabla_x v(X_s^{t,x}) G. 
\end{equation*}
Then we easily obtain that 
\begin{align*}
Z_r^{T,t,x}-Z_r^{t,x} = \nabla_x w_T(r,X_r^{t,x}) G.
\end{align*}
Thus, applying the Bismut-Elworthy formula (see \cite{FUHRMAN_TESSITORE_THE_BISMUT_ELWORTHY_FORMULA}, Theorem $4.2$), we get 
$\forall x, h \in H$, $\forall t < T$, 
\begin{align*}
\nabla_x w_T(t,x)h &= \E \int_t^{T'} [{f} \left(X_s^{t,x},\nabla_x w_T(s,X_s^{t,x}) G + Z_s^{t,x}\right) - {f} \left(X_s^{t,x},Z_s^{t,x}\right)] U^{h}(s,t,x) \der s \\
&~~~~~~~~~~~~~~~~~~~~~~~ + \E\left[[w_{T-T'}(0,X_{T'}^{t,x})] U^{h}(T',t,x)\right],
\end{align*}
where, $\forall 0 \leq s \leq T$, $\forall x \in H$,
\begin{align*}
U^{h}(s,t,x) = \frac{1}{s-t} \int_t^s \langle G^{-1}\nabla_x X_u^{t,x} h, \der W_u \rangle.
\end{align*}
Let us recall that 
\begin{align*}
&\nabla_x X_s^{t,x} h = e^{(s-t)A} h,
\end{align*}
then,
\begin{equation*}
\E|U^{h}(s,t,x)|^2  = \frac{1}{|s-t|^2}\int_t^s |G^{-1} \nabla_x X_u^{t,x} h |^2 \der u \leq \frac{C |h|^2}{s-t},
\end{equation*}
where $C$ is independent of $t,s$ and $x$.

Thus we get,$\forall x, h \in H$, $\forall t < T$ , using inequality (\ref{estimee w_T(0,x) dans la preuve}),
\begin{align*}
|\nabla_x w_T(t,x) h| \leq C \int_t^{T'} \frac{\sqrt{\E(|\nabla_x w_T(s,X_s^{t,x})|^2)} |h|}{\sqrt{s-t}} \der s + C\frac{(1+|x|^{1+\mu}) |h|}{\sqrt{T'-t}}.
\end{align*}
We define 
\begin{align*}
\varphi(t) = \sup_{x \in H} \frac{|\nabla_x w_T(t,x)|}{(1+|x|^{1+\mu})},
\end{align*}
and we remark that $\varphi(t)$ is well defined for all $t < T$. Indeed $\nabla_x w_T(t,x) = \nabla_x u_T(t,x) - \nabla_x v(x)$ and we have $|\nabla_x u_T(t,x)| \leq C_T(T-t)^{-1/2}(1+|x|^{\mu})$ (see Theorem $4.2$ in \cite{FUHRMAN_TESSITORE_NONLINEAR_KOLMOGOROV_INFINITE_DIMENSION}) and  $|\nabla_x v(x)| \leq C(1+|x|^{1+\mu})$ (by Lemma \ref{lemma existence EBSDE}).
Then we obtain
\begin{align*}
|\nabla_x w_T(t,x) h| &\leq C\int_t^{T'} \frac{\varphi(s)}{\sqrt{s-t}} \sqrt{\E((1+|X_s^{t,x}|^{1+\mu})^2)}|h| \der s + C\frac{(1+|x|^{1+\mu}) |h|}{\sqrt{T'-t}}
\end{align*}
which leads to 
\begin{align*}
\frac{|\nabla_x w_T(t,x) h|}{(1+|x|^{1+\mu})} \leq C|h|\left(\int_t^{T'} \frac{\varphi(s)}{\sqrt{s-t}} \der s + \frac{1}{\sqrt{T'-t}}\right) .
\end{align*}
Taking the supremum over $h$ such that $|h|=1$ and $x \in H$, we have
\begin{align*}
\varphi(t) \leq C\int_t^{T'} \frac{\varphi(s)}{\sqrt{s-t}} \der s + \frac{C}{\sqrt{T'-t}}.
\end{align*}
Now remark that we can rewrite the above inequality as follows:
\begin{align*}
\varphi(T'-t) \leq C\int_0^t \frac{\varphi(T'-s)}{\sqrt{t-s}} \der s + \frac{C}{\sqrt{t}},
\end{align*}
then by Lemma $7.1.1$ in \cite{HENRY_LIVRE} we get
\begin{align*}
\varphi(T'-t) \leq \frac{C}{\sqrt{t}} + C\theta \int_0^t E'(\theta(t-s))\frac{1}{\sqrt{s}} \der s,
\end{align*}
where 
\begin{align*}
\theta = (C\Gamma(1/2))^2,\quad \textrm{and} \quad E(z) = \sum_0^\infty \frac{z^{n/2}}{\Gamma (n/2 + 1)}.
\end{align*} 
Therefore, taking $t = T'$ leads us to
\begin{align*}
\varphi(0) \leq \frac{C}{\sqrt{T'}} + C\theta \int_0^{T'} E'(\theta(T'-s))\frac{1}{\sqrt{s}} \der s \\
\end{align*}
which implies
\begin{align*}
|\nabla_x w_T(0,x)| \leq C_{T'}(1+|x|^{1+\mu})\frac{1+\sqrt{T'}}{\sqrt{T'}}.
\end{align*}

For the third inequality of Lemma \ref{lemma estimates on w_T(x)}, we have in the same way as for equation ($\ref{representation 2}$), $\forall x \in H$, $\forall T >0$,
\begin{align*}
w_T(0,x) &= \E^{\Q^T} (g(X_T^x)-v(X_T^x)) \\
& = \mathscr{P}_T[g-v](x),
\end{align*}
where $\mathscr{P}_t$ is the Kolmogorov semigroup associated to the following SDE
\begin{align*}
\der U_t^x = [AU_t^x + G\beta^T(t,U_t^x)] \der t + G \der W_t, ~~~U_0^x =x, \quad t\geq 0,
\end{align*} 
and where $\beta^T(t,x) =$
\begin{align*}
 \left\{
\begin{array}{ll}
\frac{(f(x,\nabla u_T(t,x)G) - f(x,\nabla v(x)G))(\nabla u_T(t,x)G-\nabla v(x)G)^*}{|(\nabla u_T(t,x)-\nabla v(x))G|^2} \mathds{1}_{t < T}& \\
~~~+ \frac{(f(x,\nabla u_T(T,x)G) - f(x,\nabla v(x)G))(\nabla u_T(T,x)G-\nabla v(x)G)^*}{|(\nabla u_T(T,x)-\nabla v(x))G|^2} \mathds{1}_{t \geq T}& \text{ if } \nabla u_T(t,x)-\nabla v(x) \neq 0,\\
0~, &~~\text{otherwise}.
\end{array}\right.
\end{align*}

Therefore, $\forall x \in H$, $\forall T >0$ we can write
\begin{align*}
|w_T(0,x) - w_T(0,y)| &= |\mathscr{P}_T[g-v](x)-\mathscr{P}_T[g-v](y)|.
\end{align*}
Then, as $\beta^T$ is uniformly bounded in $t$ and $x$, by Lemma \ref{functions approchée par suite uniforme de fonctions Lipchitz},
and thanks to Remark \ref{remarque F m,n} 
we obtain, since $(g(\cdot)-v(\cdot))$ has polynomial growth of order $1+\mu$:
\begin{align}\label{estimate w_T(x) - w_T(y) 1}
|w_T(0,x) - w_T(0,y)| \leq  C(1+|x|^{2+\mu}+|y|^{2+\mu})e^{-\hat{\eta}T},
\end{align}
which conclude the proof of the lemma.
\end{proof}
Now, let us come back to the proof of the theorem. The first estimate of Lemma \ref{lemma estimates on w_T(x)} allows us to construct, by a diagonal procedure, a sequence $(T_i)_i \nearrow +\infty$ such that for a function $w:D \rightarrow \mathbb{R}$ defined on a countable dense subset $D$ of $H$, the following holds
\begin{align*}
\forall x \in D, \lim_{i \rightarrow + \infty} w_{T_i}(0,x) = w(x).  
\end{align*}
Then we fix $T'>0$ and, by second estimate of  Lemma \ref{lemma estimates on w_T(x)}, we obtain that for every $x,y \in H$, for every $T \geq T'$,
\begin{align*}
|w_T(0,x)-w_T(0,y)| \leq \frac{C_{T'}}{\sqrt{T'}}(1+|x|^{1+\mu}+|y|^{1+\mu})|x-y|.
\end{align*}
By using this last inequality it is possible to extend $w$ to the whole $H$. Indeed if $x \notin D$ then there exists $(x_p)_{p \in \N} \in D^\N$ such that $x_p \rightarrow x$. Thus if we set $w(x) := \lim_{p \rightarrow + \infty} w(x_p)$, it is easy to check that $w_T(0,x) \underset{T \rightarrow + \infty}{\longrightarrow} w(x)$ for any $x \in H$.

Now, let us show that $w : H \rightarrow \R$ is a constant function. We have, by the third inequality of Lemma \ref{lemma estimates on w_T(x)}, for all  $ x,y \in H$ and $ T > 0$,
\begin{align*}
|w_T(0,x)-w_T(0,y)| \leq C(1+|x|^{{2+\mu}} + |y|^{{2+\mu}})e^{-\hat{\eta} T}.
\end{align*}

Applying the previous inequality with $T = T_i$ and taking the limit in $i$ shows us that $x \mapsto w(x)$ is a constant function, namely  there exists $L_1 \in \R$ (independent of $x$) such that: $\forall x \in H$,
\begin{align*}
\lim_i w_{T_i}(0,x) = L_1.
\end{align*}

We remark that for any compact subset $K$ of $H$, $\left\{ w_T(0,\cdot)_{|K} ;T >1\right\}$ is a relatively compact subspace of the space of continuous functions $K \rightarrow \R$ for the uniform distance (denoted by $(\mathscr{C}(K,\R),||\cdot||_{K,\infty})$ thanks to the two first inequalities of Lemma \ref{lemma estimates on w_T(x)}. Note now that $L_1$ is an accumulation point of $\left\{ w_T(0,\cdot)_{|K} ;T >1\right\}$ since $w_{T_i}(\cdot)$ converges uniformly toward $L_1$ on any compact subset of $H$ by the second inequality of Lemma \ref{lemma estimates on w_T(x)}.

Therefore, if we show that for every compact subset $K$ of $H$, $\left\{ w_T(0,\cdot)_{|K} ;T >1\right\}$ admits only one accumulation point (independently of $K$), it will imply that for all $K$ compact subset of $H$, for all $x \in K$
\begin{align*}
\lim_{T \rightarrow + \infty} w_T(0,x) = L_1,
\end{align*}
or, in other words, for all $x \in H$,
\begin{align*}
\lim_{T \rightarrow + \infty} w_T(0,x) = L_1.
\end{align*}
Now we claim that the accumulation point is unique. Let us assume that there exists another subsequence $(T_i')_{i \in \N} \nearrow + \infty$ and $w_{\infty,K}(\cdot) \in \mathscr{C}(K,\R)$such that 
\begin{align*}
\parallel w_{T'_i}(0,\cdot) - w_{\infty,K}(\cdot) \parallel_{K,\infty} \underset{i \rightarrow + \infty}{\longrightarrow} 0.
\end{align*}
Then, by the third inequality of Lemma \ref{lemma estimates on w_T(x)}, there exists $L_{2,K}$ such that $\forall x \in K$, $w_{\infty,K}(x) = L_{2,K}$.

Let us write, $\forall x \in H$, $\forall T,S >0$:
\begin{align*}
w_{T+S}(0,x) &= Y_0^{T+S,x} - \lambda(T+S) - Y_0^x \\
&= Y_S^{T+S,x} -\lambda T - Y_S^x + \int_0^S (f(X_r^x,Z_r^{T+S,x}) - f(X_r^x,Z_r^x)) \der r \\
&~~~~~~~~~~~~~~~~~~~~~~~~~~~~~ - \int_0^S (Z_r^{T+S,x} - Z_r^x) \der W_r \\
&= Y_S^{T+S,x} -\lambda T - Y_S^x + \int_0^S (Z_r^{T+S,x} - Z_r^{x}) \der \widetilde{W}_r^{T,S},
\end{align*}
with
\begin{align*}
\widetilde{W}_t^{T,S} = -\int_0^t \beta^{T,S}(s,X_s^x) \der s + W_t, ~~~~\forall t \in [0,S]
\end{align*}
and where $\beta^{T,S}(t,x) =$
\begin{align*}
\left\{
\begin{array}{ll}
\frac{(f(x,\nabla u_{T+S}(t,x)G) - f(x,\nabla v(x)G))((\nabla u_{T+S}(t,x)-\nabla v(x))G)^*}{|(\nabla u_{T+S}(t,x)-\nabla v(x))G|^2} \mathds{1}_{t \leq S},&\\
~+\frac{(f(x,\nabla u_{T+S}(S,x)G) - f(x,\nabla v(x)G))((\nabla u_{T+S}(S,x)-\nabla v(x))G)^*}{|(\nabla u_{T+S}(S,x)-\nabla v(x))G|^2} \mathds{1}_{t > S} & \text{if } \nabla u_{T+S}(t,x)-\nabla v(x) \neq 0 \\
 0, & \text{otherwise}.
\end{array}\right.
\end{align*}

Taking the expectation with respect to the probability $\Q^{T,S}$ under which ${W}^{T,S}$ is a Brownian motion we get (using equality (\ref{changement de temps w}) for the third equality):
\begin{align}\label{w_(T+S)(0,x)=E(w_T(0,U_S^x))}
w_{T+S}(0,x)  &= \E^{\Q^{T,S}}(Y_S^{T+S,x} - \lambda T - Y_S^x) \nonumber\\
&= \E^{\Q^{T,S}}(w_{T+S}(S,X_S^x))\nonumber\\
&= \E^{\Q^{T,S}}(w_T(0,X_S^x)) \nonumber\\
&= \mathscr{P}_S[w_T(0,\cdot)](x),
\end{align}
where $\mathscr{P}_t$ is the Kolmogorov semigroup of the following SDE defined $\forall t \in \R_+$:
\begin{align*}
\der U_t^x = [AU_t^x + G\beta^{T,S}(t,U_t^x) ] \der t + G \der W_t, ~~~U_0^x =x.
\end{align*}
This implies, substituting $T$ by $T_i'$ and $S$ by $T_i - T_i'$, (up to a subsequence for $(T_i)_{i\in \N}$ such that $T_i > T_i'$), for all $x \in H$,
\begin{align*}
w_{T_i}(0,x) = \mathscr{P}_{T_i-T_i'}[w_{T_i'}(0,\cdot)](x).
\end{align*}
We recall that $\lim_{i} w_{T_i}(0,x) = L_1$ and we will show that the second term converges toward $L_{2,K}$ when $x \in K$. We have, for all $x \in K$,
\begin{align*}
|\mathscr{P}_{T_i-T_i'}[w_{T_i'}(0,\cdot)](x)-L_{2,K}| &\leq |\mathscr{P}_{T_i-T_i'}[w_{T_i'}(0,\cdot)](x) -w_{T_i'}(0,x)|+ |w_{T_i'}(0,x) - L_{2,K}| \\
\end{align*}
We recall that $|w_{T_i'}(0,x) - L_{2,K}| \underset{i \rightarrow + \infty}{\longrightarrow} 0$. Furthermore, if we denote by $U^{x,m,n}$ the mild solution of 
\begin{align*}
\der U_t^{x,m,n} = [AU_t^{x,m,n}+G\beta_{m,n}^{T,S}(t,U_t^{x,m,n})]\der t + G \der W_t,~~~~~ U_0^{x,m,n}=x,
\end{align*}
where $(\beta_{m,n}^{T,S})_{m \in \N^*, n \in \N^*}$ is the sequence of functions obtained by Lemma \ref{functions approchée par suite uniforme de fonctions Lipchitz}, then we have
\begin{align*}
|\mathscr{P}_{T_i-T_i'}[w_{T_i'}(0,\cdot)](x)-w_{T_i'}(0,x)| &= |\lim_n \lim_m \E (w_{T_i'}(0,U_{T_i-T_i'}^{x,m,n}))-w_{T_i'}(0,x)|\\
&\leq \lim_n \lim_m |\E (w_{T_i'}(0,U_{T_i-T_i'}^{x,m,n}))-w_{T_i'}(0,x)|\\
&\leq \lim_n \lim_m \E |w_{T_i'}(0,U_{T_i-T_i'}^{x,m,n}) - w_{T_i'}(0,x)|\\
&\leq \lim_n \lim_m \E[C(1+|U_{T_i-T_i'}^{x,m,n}|^{2+\mu}+|x|^{2+\mu})e^{-\hat{\eta}T_i'}]\\
&\leq C(1+|x|^{2+\mu})e^{-\hat{\eta}T_i'}
\end{align*}
where the third line is obtained thanks to the third inequality of Lemma \ref{lemma estimates on w_T(x)}. Therefore, letting $i \rightarrow + \infty$ shows us that for all $x \in K$,
\begin{align*}
\mathscr{P}_{T_i-T_i'}[w_{T_i'}(0,\cdot)](x) \longrightarrow L_{2,K}.
\end{align*}
Thus, for any compact subset $K$ of $H$, $L_1 = L_{2,K}$, which as mentioned before, implies that for all $x \in H$,
\begin{align*}
\lim_{T \rightarrow + \infty} w_T(0,x) = L_1.
\end{align*}

Finally we prove that this convergence holds with an explicit rate of convergence. Let us write, $\forall x \in H, \forall T > 0$,
\begin{align*}
 |w_T(0,x) - L| &= \lim_{V \rightarrow +\infty} |w_T(0,x)-w_V(0,x)|\\
&= \lim_{V \rightarrow +\infty} |w_T(0,x)-\mathscr{P}_{V-T}[w_T(0,\cdot)](x)|
\end{align*}
thanks to equality (\ref{w_(T+S)(0,x)=E(w_T(0,U_S^x))}), where $\mathscr{P}_t$ is Kolmogorov semigroup associated to the following SDE defined $\forall t \in \R_+$:
\begin{align*}
\der U_t^x = [AU_t^x + \beta^{T,V-T}(t,U_t^x)] \der t + G \der W_t, ~~~U_0 =x.
\end{align*}
Now, if we denote by $U^{x,m,n}$ the mild solution of the following SDE, $\forall t \geq 0$,
\begin{align*}
\der U_t^{x,m,n} = [AU_t^{x,m,n}+G\beta_{m,n}^{T,V-T}(t,U_t^x)]\der t + G\der W_t, ~~~~~U_0^{x,m,n}=x,
\end{align*}
where $(\beta_{m,n}^{T,V-T})_{m \in \N^*, n \in \N^*}$ is the sequence of functions obtained by lemma \ref{functions approchée par suite uniforme de fonctions Lipchitz}, then we have,
\begin{align*}
|w_T(0,x) - L| &= \lim_{V \rightarrow + \infty}|w_T(0,x)-\lim_{n \rightarrow + \infty}\lim_{m \rightarrow + \infty} \E(w_T(0,U_{V-T}^{x,m,n}))|\\
&\leq  \lim_{V \rightarrow + \infty} \lim_{n \rightarrow + \infty}\lim_{m \rightarrow + \infty} C\E (1+|x|^{2+\mu}+|U_{V-T}^{x,m,n}|^{2+\mu})e^{-\hat{\eta}T}\\
&\leq C(1+|x|^{2+\mu})e^{-\hat{\eta}T}
\end{align*}
thanks to the third estimate in Lemma \ref{lemma estimates on w_T(x)}.
\end{proof}

\begin{rema}
By the third inequality of Lemma \ref{lemma estimates on w_T(x)}, we have
\begin{align*}
| Y_0^{T,x} - Y_0^{T,0}  - (Y_0^x - Y_0^0) | \leq C(1+|x|^{2+\mu})e^{- \hat{\eta} T},
\end{align*}
i.e.
\begin{align*}
| u(T,x)- u(T,0)  - (v(x)-v(0)) | \leq C(1+|x|^{2+\mu})e^{- \hat{\eta} T},
\end{align*}
which provides possibly an efficient approximation for $Y_0^x$ and $v(x)$.
\end{rema}

\section{Application to an ergodic control problem}
In this section, we show how we can apply our results to an ergodic control problem. In this section we will still assume that Hypothesis \ref{hypo SDE non degenerate}  hold true and that $F$ is Lipschitz and bounded and belong to the class $\Ga$. Let $U$ be a separable metric space. We define a control $a$ as an $(\F_t)$-predictable $U$-valued process. We will assume the following
\begin{hypo}\label{hypo controle}
The functions $R : U \rightarrow H$, $L : H \times U \rightarrow \R$ and $g_0 : H \rightarrow \R$ are measurable and satisfy, for some constants $c > 0$, $C>0$ and $\mu$,
\begin{enumerate}
\item $|R(a)| \leq c$, for all $a \in U$;
\item $L(\cdot,a)$ is continuous in $x$ uniformly with respect to $a \in U$; furthermore $|L(x,a)| \leq C(1+|x|^\mu)$ for all $x \in H$ and all $a \in U$;
\item $g_0(\cdot)$ is continuous and $|g_0(x)| \leq C(1+|x|^\mu)$ for all $x \in H$.
\end{enumerate}
\end{hypo}
\bigskip

We denote by $(X_t^x)_{t \geq 0}$ the solution of (\ref{SDE}).
Given an arbitrary control $a$ and $T>0$, we introduce the Girsanov density
\begin{align*}
\rho_T^{x,a} = \exp \left( \int_0^T  G^{-1}R(a_s) \der W_s - \frac{1}{2}\int_0^T |G^{-1}R(a_s)|_{\Xi}^2\der s \right)
\end{align*}
and the probability $\Pb_T^{a}=\rho_T^{a} \Pb$ on $\F_T$. We introduce two costs.
The first one is the cost in finite horizon:
\begin{align*}
J^T(x,a) := \E^{a,T} \int_0^T L(X_s^x,a_s )\der s + \E^{a,T} g_0(X_T^x),
\end{align*}
where $\E^{a,T}$ denotes the expectation with respect to $\Pb_T^{a}$.
The associated optimal control problem is to minimize the cost $J^T(x,a)$ over all controls $a^T : \Omega \times [0,T] \rightarrow U$, progressively measurable.

The second one is called the ergodic cost and is the time averaged finite horizon cost:
\begin{align*}
J(x,a) := \limsup_{T \rightarrow + \infty} \frac{1}{T} \E^{a,T} \int_0^T L(X_s^x,a_s )\der s.
\end{align*}
The associated optimal control problem is to minimize the cost $J(x,a)$ over all controls $a : \Omega \times [0,+\infty[ \rightarrow U$, progressively measurable.

We notice that $W_t^{a} = W_t - \int_0^t G^{-1}R(a_s)\der s$ is a Wiener process on $[0,T]$ under $\Pb_T^a$ and that 
\begin{align*}
\der X_t^x = (A X_t^x + F(X_t^x) + R(a_t))\der t + G \der W_t^a, ~~~~~t \in [0,T],
\end{align*}
and this justifies our formulation of the control problem.

%
%
%

We want to show how our results can be applied to such an optimisation problem to get an asymptotic expansion of the finite horizon cost involving the ergodic cost.

To apply our results, we first define the Hamiltonian in the usual way
\begin{align}\label{hamiltonien}
f_0(x,z) = \inf_{a \in U} \left\{ L(x,a) + zG^{-1}R(a) \right\},
\end{align}
and we remark that, if for all $x,z,$ the infimum is atttained in $\eqref{hamiltonien}$ then by the Filippov Theorem, see \cite{MCSHANE_WARFIELD_FILIPPOV_IMPLICIT}, there exists a measurable function $\gamma : H \times \Xi^* \rightarrow U$ such that 
\begin{align*}
f_0(x,z) = L(x,\gamma(x,z)) + zG^{-1}R(\gamma(x,z)).
\end{align*}

\begin{lemma}
Under the above assumptions, the Hamiltonian $f_0$ satisfies assumptions on $f$ in hypotheses \ref{hypo BSDE 1}, \ref{hypo BSDE Markov 1}, \ref{hypo EBSDE} or \ref{hypo BSDE Markov 2}.
\end{lemma}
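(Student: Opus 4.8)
The plan is to verify the four structural requirements that appear in Hypotheses \ref{hypo BSDE 1}, \ref{hypo BSDE Markov 1}, \ref{hypo EBSDE} and \ref{hypo BSDE Markov 2} one at a time for $f=f_0$, namely: (i) Lipschitz continuity in the $z$-variable with the constant controlled by $\sup_{a\in U}|G^{-1}R(a)|$; (ii) continuity in $x$; (iii) polynomial growth of $x\mapsto f_0(x,0)$; and, where relevant, (iv) the fact that the boundedness/class-$\Ga$ assumption on $F$ and the polynomial-growth assumption on $g$ (here $g=g_0$) are unchanged since those hypotheses are already assumed in the statement of the section. The assumption on $F$ and $g_0$ require nothing new, so the whole content is the three analytic properties of the infimum-Hamiltonian $f_0$.

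First I would establish the Lipschitz property in $z$. For fixed $x$ and arbitrary $z,z'\in\Xi^*$, for every $a\in U$ one has $L(x,a)+zG^{-1}R(a)\le L(x,a)+z'G^{-1}R(a)+|z-z'|\,|G^{-1}|_{L(H,\Xi)}\,|R(a)|\le L(x,a)+z'G^{-1}R(a)+c\,|G^{-1}|_{L(H,\Xi)}\,|z-z'|$ using Hypothesis \ref{hypo controle}(1). Taking the infimum over $a$ and then exchanging the roles of $z$ and $z'$ gives $|f_0(x,z)-f_0(x,z')|\le l|z-z'|$ with $l=c\,|G^{-1}|_{L(H,\Xi)}$; this is the standard ``infimum of a family of uniformly Lipschitz functions is Lipschitz'' argument. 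For the growth of $f_0(\cdot,0)$: $f_0(x,0)=\inf_{a}L(x,a)$, and Hypothesis \ref{hypo controle}(2) gives $|L(x,a)|\le C(1+|x|^\mu)$ uniformly in $a$, so $|f_0(x,0)|\le C(1+|x|^\mu)$ directly.

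The only step requiring slightly more care — and the main (mild) obstacle — is continuity of $f_0(\cdot,z)$ in $x$ for fixed $z$. Here I would use that $x\mapsto L(x,a)$ is continuous uniformly in $a$ (Hypothesis \ref{hypo controle}(2)): given $x_n\to x$, for any $\varepsilon>0$ choose $n$ large so that $\sup_{a\in U}|L(x_n,a)-L(x,a)|\le\varepsilon$; then for every $a$, $|(L(x_n,a)+zG^{-1}R(a))-(L(x,a)+zG^{-1}R(a))|\le\varepsilon$, and taking infima over $a$ yields $|f_0(x_n,z)-f_0(x,z)|\le\varepsilon$. Hence $f_0(x_n,z)\to f_0(x,z)$, i.e. $f_0(\cdot,z)$ is continuous. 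Combining these three facts with the unchanged hypotheses on $F$ (Lipschitz, bounded, class $\Ga$) and $g_0$ (continuous, polynomial growth of order $\mu$) shows that $f_0$ — together with $\xi^T=g_0(X_T^x)$ in the Markovian cases — satisfies all of Hypotheses \ref{hypo BSDE 1}, \ref{hypo BSDE Markov 1}, \ref{hypo EBSDE} and \ref{hypo BSDE Markov 2}, which is exactly the claim.
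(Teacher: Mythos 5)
Your proof is correct: the three verifications (infimum of a uniformly Lipschitz family is Lipschitz with constant $c\,|G^{-1}|_{L(H,\Xi)}$, polynomial growth of $f_0(\cdot,0)$ from the uniform bound on $L$, and continuity in $x$ from the uniform-in-$a$ continuity of $L(\cdot,a)$) are exactly what is needed, since the hypotheses only require continuity of $f(\cdot,z)$ in the first variable. The paper itself does not write this argument out but simply cites Lemma 5.2 of \cite{FUHRMAN_TESSITORE_NONLINEAR_KOLMOGOROV_INFINITE_DIMENSION}, which carries out the same standard computation, so you have in effect supplied the proof the paper delegates to that reference.
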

\begin{proof}
See Lemma $5.2$ in \cite{FUHRMAN_TESSITORE_NONLINEAR_KOLMOGOROV_INFINITE_DIMENSION}.
%
%
\end{proof}

We recall the following results about finite horizon cost:
\begin{lemma}
Assume that Hypotheses \ref{hypo SDE non degenerate} and \ref{hypo controle} hold true and that $F$ is Lipschitz bounded and belong to the class $\Ga$, then for arbitrary control $a$,
\begin{align*}
J^T(x,a) \geq u(T,x),
\end{align*}
where $u(t,x)$ is the mild solution of 
\begin{align*}
\left\{ 
\begin{array}{ll}
\frac{\partial u(t,x)}{\partial t} = \LL u(t,x) + f_0(x,\nabla u(t,x)G) , & \forall (t,x) \in \R_+ \times H,\\
u(0,x) = g_0(x),&\forall x \in H.
\end{array}
\right.
\end{align*}
Furthermore, if for all $x$, $z$ the infimum is attained in \eqref{hamiltonien} then we have the equality: 
\begin{align*}
J^T(x,\overline{a}^T) = u(T,x),
\end{align*}
where $\overline{a}^T_t =  \gamma(X_t^{x,\overline{a}^T}, \nabla u(t,X_t^{x,\overline{a}^T})G)$.
\end{lemma}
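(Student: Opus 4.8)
The plan is to run the classical BSDE verification argument. First I would recall that, by the preceding lemma, $(f_0,g_0)$ satisfies Hypothesis~\ref{hypo BSDE Markov 1}, so that (via the time-change remark of Section~3) the mild solution $u$ obeys $u(T,x)=Y_0^{T,x}$, where $(Y^{T,x},Z^{T,x})$ solves
\begin{align*}
Y_s^{T,x}=g_0(X_T^x)+\int_s^T f_0(X_r^x,Z_r^{T,x})\,\der r-\int_s^T Z_r^{T,x}\,\der W_r,\qquad s\in[0,T],
\end{align*}
with the identification $Z_s^{T,x}=\nabla_x u_T(s,X_s^x)G$ already used in the proof of Theorem~\ref{theorem second estimate on behavior}. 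Then, fixing an arbitrary control $a$, I note that $|R(a)|\le c$ forces $G^{-1}R(a_\cdot)$ to be bounded by $|G^{-1}|_{L(H,\Xi)}\,c$, so $\rho_T^a$ is a genuine martingale with moments of every order, and under $\Pb_T^a$ the process $W_t^a=W_t-\int_0^t G^{-1}R(a_s)\,\der s$ is a Wiener process on $[0,T]$ for which $\der X_t^x=(AX_t^x+F(X_t^x)+R(a_t))\,\der t+G\,\der W_t^a$.

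Next I would rewrite the BSDE in terms of $W^a$, which replaces the generator by $f_0(X_r^x,Z_r^{T,x})-Z_r^{T,x}G^{-1}R(a_r)$; by the very definition \eqref{hamiltonien} of $f_0$ this is $\le L(X_r^x,a_r)$. Since $Z^{T,x}\in L_\mathscr{P}^p(\Omega,L^2([0,T];\Xi^*))$ for every $p\ge2$ under $\Pb$ and $\rho_T^a$ has all moments, H\"older's inequality yields $\E^{a,T}\int_0^T|Z_r^{T,x}|^2\,\der r<\infty$, so $\int_0^{\cdot}Z_r^{T,x}\,\der W_r^a$ is a true $\Pb_T^a$-martingale. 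Taking $\E^{a,T}$ at $s=0$ then gives
\begin{align*}
u(T,x)=Y_0^{T,x}\le\E^{a,T}\Big[g_0(X_T^x)+\int_0^T L(X_s^x,a_s)\,\der s\Big]=J^T(x,a),
\end{align*}
where the right-hand side is finite thanks to the polynomial growth of $g_0$ and $L(\cdot,a)$, the moment bounds of Lemma~\ref{lemma existence uniqueness SDE}, and H\"older's inequality with $\rho_T^a$.

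For the equality I would assume the infimum in \eqref{hamiltonien} is attained, pick the Filippov measurable selector $\gamma$, and set $\overline a_s^T:=\gamma(X_s^x,\nabla_x u_T(s,X_s^x)G)$, which coincides with the feedback control in the statement once, through Girsanov, one identifies the process $X^x$ under $\Pb_T^{\overline a^T}$ with the controlled state $X^{x,\overline a^T}$ (so that no independent well-posedness of the closed-loop equation has to be established). With this choice the inequality for $f_0$ becomes an equality $\der r$-a.e., $\Pb$-a.s., hence every inequality in the previous paragraph is an equality and $u(T,x)=J^T(x,\overline a^T)$.

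The argument is conceptually routine, so the delicate points are essentially bookkeeping: (i) the integrability ensuring that $\int_0^{\cdot}Z_r^{T,x}\,\der W_r^a$ is a genuine (not merely local) martingale under the Girsanov-changed measure; (ii) checking that $\overline a^T$ is an admissible control whose associated state is indeed $X^x$ seen under $\Pb_T^{\overline a^T}$; and (iii) keeping straight the time reversal relating the forward Cauchy solution $u$ and the backward BSDE solution $u_T$. I expect (i) and (ii) to be the only real obstacle, and they are handled by the moment estimates of Lemma~\ref{lemma existence uniqueness SDE}, the existence and regularity of Lemma~\ref{lemma existence uniqueness BSDE}, and the Filippov theorem already invoked in the text.
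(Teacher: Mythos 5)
Your proposal is correct, and it is essentially the proof the paper relies on: the paper does not argue this lemma itself but simply cites Theorem 5.3 of Fuhrman--Tessitore, and your verification argument (the identification $u(T,x)=Y_0^{T,x}$, Girsanov with the bounded kernel $G^{-1}R(a_\cdot)$, the inequality $f_0(x,z)-zG^{-1}R(a)\leq L(x,a)$ from the definition of the Hamiltonian, the true-martingale property of $\int Z\,\der W^a$ via the moments of $Z$ and of $\rho_T^a$, and the Filippov selector for the equality case) is precisely the standard proof given there. The weak-formulation point you flag in (ii) --- that the controlled state is just $X^x$ viewed under $\Pb_T^{\overline a^T}$, so no closed-loop well-posedness is needed --- is indeed how the paper's formulation of the control problem is set up, so nothing is missing.
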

\begin{proof}
See Theorem $5.3$ in \cite{FUHRMAN_TESSITORE_THE_BISMUT_ELWORTHY_FORMULA}. 
\end{proof}

Similarly, for the ergodic cost we have the following result.
\begin{lemma}
Assume that Hypotheses \ref{hypo SDE non degenerate} and \ref{hypo controle} hold true and that $F$ is Lipschitz bounded and belong to the class $\Ga$, then for arbitrary control $a$,
\begin{align*}
J(x,a) \geq \lambda,
\end{align*}
where $(v,\lambda)$ is the mild solution of 
\begin{align*}
\LL v(x) + f_0(x,\nabla v(x) G) - \lambda = 0, ~~~~\forall x \in H.
\end{align*}
Furthermore, if for all $x$, $z$ the infimum is attained in \eqref{hamiltonien} then we have the equality:
\begin{align*}
J(x,\overline{a}) = \lambda,
\end{align*}
where $\overline{a}_t = \gamma(X_t^{x,\overline{a}}, \nabla v(X_t^{x,\overline{a}})G)$.
\end{lemma}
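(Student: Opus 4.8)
The plan is to run the standard verification argument based on the probabilistic representation of the EBSDE solution and a Girsanov change of measure. By Lemma \ref{lemma existence EBSDE} the solution of the EBSDE associated with $f_0$ satisfies $Y^x_t = v(X^x_t)$ and $Z^x_t = \nabla v(X^x_t)G$, with $|v(y)|\le C(1+|y|^{1+\mu})$ and $|\nabla v(y)|\le C(1+|y|^{1+\mu})$. Fix a control $a$ and $T>0$. Under $\Pb_T^a$ the process $W_t^a = W_t - \int_0^t G^{-1}R(a_s)\,\der s$ is a Wiener process; writing the EBSDE between $0$ and $T$ and substituting $\der W_s = \der W_s^a + G^{-1}R(a_s)\,\der s$ yields
$$v(x) = v(X_T^x) + \int_0^T \big(f_0(X_s^x,Z_s^x) - Z_s^x G^{-1}R(a_s) - \lambda\big)\,\der s - \int_0^T Z_s^x\,\der W_s^a .$$
By the definition \eqref{hamiltonien} of $f_0$ one has $f_0(X_s^x,Z_s^x) - Z_s^x G^{-1}R(a_s)\le L(X_s^x,a_s)$ pointwise, with equality precisely when $a_s=\gamma(X_s^x,Z_s^x)$.

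Next I would take the $\Pb_T^a$-expectation. The key integrability point is that the drift perturbation $F(X^x_\cdot)+R(a_\cdot)$ is bounded, by boundedness of $F$ and Hypothesis \ref{hypo controle}(1), so Lemma \ref{lemma existence uniqueness SDE} applies under $\Pb_T^a$ and gives $\sup_{t\le T}\E^{a,T}|X_t^x|^p\le C(1+|x|)^p$ with $C$ independent of $T$; together with the polynomial bound on $\nabla v$ this gives $\E^{a,T}\int_0^T|Z_s^x|^2\,\der s<\infty$, so $\int_0^\cdot Z_s^x\,\der W_s^a$ is a true $\Pb_T^a$-martingale and its expectation vanishes. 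Hence
$$\E^{a,T}\int_0^T L(X_s^x,a_s)\,\der s \ \ge\ v(x) - \E^{a,T}\big[v(X_T^x)\big] + \lambda T ,$$
and the inequality becomes an equality when $a$ is the feedback $\overline{a}_s=\gamma(X_s^x,\nabla v(X_s^x)G)$; note this feedback is admissible because it is a measurable functional of the fixed process $X^x$, so in this weak (Girsanov) formulation no closed-loop fixed-point problem occurs, and $X^{x,\overline a}$ coincides with $X^x$ under $\Pb$.

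Then I would divide by $T$ and let $T\to\infty$. Since $|v(y)|\le C(1+|y|^{1+\mu})$ and, by the uniform moment estimate above, $\sup_{T}\E^{a,T}|X_T^x|^{1+\mu}\le C(1+|x|^{1+\mu})$, the boundary term $\E^{a,T}[v(X_T^x)]/T$ tends to $0$. Therefore
$$J(x,a) = \limsup_{T\to+\infty}\frac1T\,\E^{a,T}\int_0^T L(X_s^x,a_s)\,\der s \ \ge\ \lambda ,$$
while for $\overline a$ the relation $\frac1T\E^{\overline a,T}\int_0^T L\,\der s = \lambda + (v(x)-\E^{\overline a,T}[v(X_T^x)])/T$ holds for every $T$, so $J(x,\overline a)=\lambda$.

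I expect the only genuinely delicate point to be the uniform-in-$T$ control of the moments of $X^x$ under the shifted measure $\Pb_T^a$, and the resulting true-martingale property of $\int_0^\cdot Z_s^x\,\der W_s^a$: these must be invoked with care since $a$ is an \emph{arbitrary} predictable process, but both follow from Lemma \ref{lemma existence uniqueness SDE} (the perturbed SDE still has a bounded drift) and the polynomial growth of $\nabla v$ from Lemma \ref{lemma existence EBSDE}. The remainder is the classical verification computation, parallel to the finite-horizon statement recalled from \cite{FUHRMAN_TESSITORE_THE_BISMUT_ELWORTHY_FORMULA} and to the ergodic-control arguments in \cite{DEBUSSCHE_HU_TESSITORE_EBSDE_WEAK_DISSIPATIVE}.
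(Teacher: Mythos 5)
Your verification argument is correct and is precisely the standard proof that the paper leaves implicit here: the lemma is stated without a written proof (its finite-horizon analogue is handled by citation to Fuhrman--Tessitore, and the ergodic case follows the same Girsanov-plus-EBSDE-representation scheme as in the cited EBSDE literature). Your handling of the two delicate points --- the uniform-in-$T$ moment bound for $X^x$ under $\Pb_T^a$ with an arbitrary predictable control (the mild-equation estimate only uses boundedness of the drift $F+R(a_\cdot)$) and the resulting square-integrability making $\int_0^\cdot Z_s^x\,\der W_s^a$ a true martingale --- is exactly what is needed, so nothing is missing.
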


Finally, we apply our result to obtain the following theorem.
\begin{theorem}
Assume that Hypotheses \ref{hypo SDE non degenerate} and \ref{hypo controle} hold true and that $F$ is Lipschitz bounded and belong to the class $\Ga$. For any control $a$ we have
\begin{align*}
\liminf_{T \rightarrow + \infty} \frac{J^T(x,a)}{ T} \geq \lambda. 
\end{align*}
Furthermore, if the infimum is attained in \eqref{hamiltonien} then 
\begin{align*}
J^T(x,\overline{a}^T) \underset{T \rightarrow +\infty}{\backsim} J(x,\overline{a}) T + v(x) + L.
\end{align*}
\end{theorem}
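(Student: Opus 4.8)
The plan is to combine the two preceding lemmas on the finite horizon and ergodic costs with the asymptotic expansion from Theorem \ref{theorem second estimate on behavior}, applied to the Hamiltonian $f_0$ in place of $f$. First I would prove the lower bound: for an arbitrary control $a$, the lemma on the finite horizon cost gives $J^T(x,a) \geq u(T,x)$, where $u$ is the mild solution of the Cauchy problem (\ref{HJB cauchy changement de temps}) with $f$ replaced by $f_0$ and $g$ replaced by $g_0$. Dividing by $T$ and invoking the estimate (\ref{first estimate behavior markovien}) of Theorem \ref{theorem first estimate on behavior} — which applies since, by the preceding lemma, $f_0$ satisfies Hypothesis \ref{hypo BSDE Markov 1} — we get $u(T,x)/T \to \lambda$, so $\liminf_{T \to +\infty} J^T(x,a)/T \geq \lambda$.

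For the second, sharper statement, I would assume the infimum in (\ref{hamiltonien}) is attained. Then the lemma on finite horizon cost gives the exact identity $J^T(x,\overline{a}^T) = u(T,x)$, and the lemma on the ergodic cost gives $J(x,\overline{a}) = \lambda$. Now Theorem \ref{theorem second estimate on behavior}, applied with $f_0$ (which also satisfies Hypothesis \ref{hypo BSDE Markov 2} after the reduction $F \equiv 0$ explained in the remark following that hypothesis), yields a constant $L \in \R$ with
\begin{align*}
u(T,x) - \lambda T - v(x) \underset{T \to +\infty}{\longrightarrow} L,
\end{align*}
that is, $u(T,x) = \lambda T + v(x) + L + o(1)$ as $T \to +\infty$. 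Substituting $J^T(x,\overline{a}^T) = u(T,x)$ and $\lambda = J(x,\overline{a})$ gives
\begin{align*}
J^T(x,\overline{a}^T) \underset{T \to +\infty}{\backsim} J(x,\overline{a}) T + v(x) + L,
\end{align*}
which is exactly the claimed asymptotic equivalence (here $\backsim$ is understood in the sense that the difference of the two sides tends to $0$, which is in fact what the convergence above provides).

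The argument is essentially a bookkeeping exercise: the real content has already been established in Theorems \ref{theorem first estimate on behavior} and \ref{theorem second estimate on behavior} and in the representation lemmas for the costs. The only point that requires a little care is checking that $f_0$ genuinely falls under the hypotheses of those theorems — in particular that $f_0(\cdot,0)$ has polynomial growth of the same order $\mu$ as $L(\cdot,a)$ and $g_0$, and that $f_0$ is Lipschitz in $z$ with constant controlled by $\sup_{a}|G^{-1}R(a)| \leq |G^{-1}|_{L(H,\Xi)}\, c$ — but this is precisely the content of the lemma citing \cite{FUHRMAN_TESSITORE_NONLINEAR_KOLMOGOROV_INFINITE_DIMENSION}, so no new work is needed. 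I do not anticipate a genuine obstacle here; if anything, the subtlety is purely notational, namely that the definition of $u$ appearing in the two cost lemmas must be reconciled with the time-reversed mild solution of (\ref{HJB cauchy changement de temps}) used in Theorem \ref{theorem second estimate on behavior}, which is handled by the change of time $\widetilde{u}_T(t,x) = u_T(T-t,x)$ already recorded in the remark following the BSDE-PDE link lemma.
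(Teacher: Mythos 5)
Your proposal is correct and follows exactly the route the paper takes: the paper's own proof is a one-line appeal to the two cost lemmas together with Theorem \ref{theorem second estimate on behavior}, and your write-up simply makes that bookkeeping explicit (lower bound from $J^T(x,a)\geq u(T,x)$ plus Theorem \ref{theorem first estimate on behavior}, and the asymptotic expansion from $J^T(x,\overline{a}^T)=u(T,x)$, $J(x,\overline{a})=\lambda$, and the convergence $u(T,x)-\lambda T - v(x)\to L$). No discrepancies to report.
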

\begin{proof}
The proof is a straightforward consequence of the two previous lemmas above  and of Theorem \ref{theorem second estimate on behavior}.
\end{proof}

\section*{Acknowledgements}
The authors would like to thank the two anonymous referees for their helpful comments.

\newpage
\bibliographystyle{siam}
\bibliography{bibliographie_nouvelle.bib}

\begin{thebibliography}{10}

\bibitem{BARLES_SOUGANIDIS_SPACE_TIME}
{\sc Guy Barles and Panagiotis~E. Souganidis}, {\em Space-time periodic
  solutions and long-time behavior of solutions to quasi-linear parabolic
  equations}, SIAM J. Math. Anal., 32 (2001), pp.~1311--1323 (electronic).

\bibitem{DA_PRATO_ZABCZYK_STOCHASTIC_EQUATIONS_INFINITE_DIMENSION}
{\sc Giuseppe Da~Prato and Jerzy Zabczyk}, {\em Stochastic equations in
  infinite dimensions}, vol.~44 of Encyclopedia of Mathematics and its
  Applications, Cambridge University Press, Cambridge, 1992.

\bibitem{DEBUSSCHE_HU_TESSITORE_EBSDE_WEAK_DISSIPATIVE}
{\sc Arnaud Debussche, Ying Hu, and Gianmario Tessitore}, {\em Ergodic {BSDE}s
  under weak dissipative assumptions}, Stochastic Process. Appl., 121 (2011),
  pp.~407--426.

\bibitem{FUHRMAN_HU_TESSITORE_EBSDE_BANACH_SPACE}
{\sc Marco Fuhrman, Ying Hu, and Gianmario Tessitore}, {\em Ergodic {BSDES} and
  optimal ergodic control in {B}anach spaces}, SIAM J. Control Optim., 48
  (2009), pp.~1542--1566.

\bibitem{FUHRMAN_TESSITORE_THE_BISMUT_ELWORTHY_FORMULA}
{\sc Marco Fuhrman and Gianmario Tessitore}, {\em The {B}ismut-{E}lworthy
  formula for backward {SDE}s and applications to nonlinear {K}olmogorov
  equations and control in infinite dimensional spaces}, Stoch. Stoch. Rep., 74
  (2002), pp.~429--464.

\bibitem{FUHRMAN_TESSITORE_NONLINEAR_KOLMOGOROV_INFINITE_DIMENSION}
\leavevmode\vrule height 2pt depth -1.6pt width 23pt, {\em Nonlinear
  {K}olmogorov equations in infinite dimensional spaces: the backward
  stochastic differential equations approach and applications to optimal
  control}, Ann. Probab., 30 (2002), pp.~1397--1465.

\bibitem{FUJITA_ISHII_LORETI_ASYMPTOTIC}
{\sc Yasuhiro Fujita, Hitoshi Ishii, and Paola Loreti}, {\em Asymptotic
  solutions of viscous {H}amilton-{J}acobi equations with
  {O}rnstein-{U}hlenbeck operator}, Comm. Partial Differential Equations, 31
  (2006), pp.~827--848.

\bibitem{HENRY_LIVRE}
{\sc Daniel Henry}, {\em Geometric theory of semilinear parabolic equations},
  vol.~840 of Lecture Notes in Mathematics, Springer-Verlag, Berlin-New York,
  1981.

\bibitem{ICHIHARA_SHEU_HAMILTON_JACOBI_BELLMAN}
{\sc Naoyuki Ichihara and Shuenn-Jyi Sheu}, {\em Large time behavior of
  solutions of {H}amilton-{J}acobi-{B}ellman equations with quadratic
  nonlinearity in gradients}, SIAM J. Math. Anal., 45 (2013), pp.~279--306.

\bibitem{MCSHANE_WARFIELD_FILIPPOV_IMPLICIT}
{\sc Edward~James McShane and Robert~Breckinridge Warfield, Jr.}, {\em On
  {F}ilippov's implicit functions lemma}, Proc. Amer. Math. Soc., 18 (1967),
  pp.~41--47.

\bibitem{NAMAH_ROQUEJOFFRE_CONVERGENCE}
{\sc Gawtum Namah and Jean-Michel Roquejoffre}, {\em Convergence to periodic
  fronts in a class of semilinear parabolic equations}, NoDEA Nonlinear
  Differential Equations Appl., 4 (1997), pp.~521--536.

\end{thebibliography}

\end{document}